\newtheorem{theorem}{Theorem}[section]
\newtheorem{lemma}[theorem]{Lemma}
\newtheorem{corollary}[theorem]{Corollary}
\theoremstyle{definition}
\newtheorem{definition}[theorem]{Definition}
\numberwithin{equation}{section}
\begin{document}
	
	\setcounter{page}{1}
	
	\title[SOME GENERALIZATIONS OF K-G-FRAMES IN HILBERT\\ $C^{\ast}$- MODULE]{SOME GENERALIZATIONS OF K-G-FRAMES IN HILBERT\\ $C^{\ast}$- MODULE}
	
	\author[H. LABRIGUI, A. TOURI, S. KABBAJ]{H. LABRIGUI $^{\ast 1}$,  A. TOURI$^1$ \MakeLowercase{and} S. KABBAJ$^1$}
	
	\address{$^{1}$Department of Mathematics, University of Ibn Tofail, B.P. 133, Kenitra, Morocco}
	\email{\textcolor[rgb]{0.00,0.00,0.84}{hlabrigui75@gmail;  touri.abdo68@gmail.com; samkabbaj@yahoo.fr}}

	\subjclass[2010]{Primary 42C15; Secondary 46L06.}
	
	\keywords{g-frame, K-g-frame, Bessel g-sequence, K-dual Bessel g-sequence, Hilbert $\mathcal{A}$-modules.}
	
	\date{Received: 07/12/2018; 
		\newline \indent $^{*}$Corresponding author}

\begin{abstract}
In this papers we investigate the g-frame and Bessel g-sequence related to a linear bounded operator $K$ in Hilbert $C^{\ast}$-module and we establish some results.
\end{abstract} \maketitle

\section{\textbf{Introduction}}

Frames were first introduced in 1952 by Duffin and Schaefer \cite{6} in the study of nonharmonic fourier series. Frames possess many nice properties which make them very useful in wavelet analysis, irregular sampling theory, signal processing and many other fields. The theory of frames has been generalized rapidly and various generalizations of frames in Hilbert spaces and Hilbert $C^{\ast}$-modules, for example $\ast$-K-g frames in $C^{\ast}$-module \cite{RK2}.  

In this article, we characterize the concept of a canonical $K$-dual Bessel sequence of a $K$-g-frame that generalizes the classical dual of a g-frame

The paper is organized as follows, in section 2 we briefly recall the definitions and basic properties of K-g-frames in Hilbert $C^{\ast}$-modules. In section 3, we characterize some result for K-dual Bessel g-sequence for given K-g-frames.In section 4, we use a family of linear operators to characterize atomic systems

\section{\textbf{Preliminaries}}
We begin this section with the following definition and some result.

\begin{definition}\textcolor{white}{.}
	
	Let $K\in End_{\mathcal{A}}^{\ast}(H)$, we call a sequence $\{\Lambda_{i}\}_{i\in I}$ a $K$-g-frame for $H$ with respect to $\{H_{i}\}_{i\in I}$ if there are two positive constants $A$ and $B$ such that: 
	\begin{equation*}
	A\langle K^{\ast}f,K^{\ast}f \rangle \leq \sum_{i\in I} \langle \Lambda_{i}f,\Lambda_{i}f \rangle \leq B\langle f,f\rangle, \textcolor{white}{..........} \forall f \in H 
	\end{equation*}
\end{definition}	
\begin{lemma}[see \cite{14}]\textcolor{white}{.}
	
	Let $\{\Lambda_{i}\}_{i\in I}$  be a Bessel g-sequence for $H$ with bound $B$. Then for each sequence $\{f_{i}\}_{i\in I} \in \oplus_{i\in I} H_{i}$, the series $\sum_{i\in I}\Lambda^{\ast}_{i}(f_{i})$ converges unconditionally. 	
\end{lemma}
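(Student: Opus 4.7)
The plan is to prove unconditional convergence by showing that the net of finite partial sums $S_{J} := \sum_{i\in J}\Lambda_{i}^{\ast}(f_{i})$, indexed by finite subsets $J\subset I$ under inclusion, is Cauchy in $H$. Since unconditional convergence of $\sum_{i\in I}\Lambda_{i}^{\ast}(f_{i})$ is equivalent to convergence of this net, this is exactly what is needed.

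Given finite sets $J_{1}\subset J_{2}\subset I$, I set $J = J_{2}\setminus J_{1}$ and $g = S_{J_{2}}-S_{J_{1}} = \sum_{i\in J}\Lambda_{i}^{\ast}(f_{i})$, and I estimate $\|g\|$ by a duality argument. Using the adjoint relation,
\begin{equation*}
\langle g,g\rangle = \left\langle \sum_{i\in J}\Lambda_{i}^{\ast}(f_{i}),\,g\right\rangle = \sum_{i\in J}\langle f_{i},\Lambda_{i}g\rangle.
\end{equation*}
The Cauchy--Schwarz inequality in the Hilbert $C^{\ast}$-module setting then yields
\begin{equation*}
\|\langle g,g\rangle\| \leq \left\|\sum_{i\in J}\langle f_{i},f_{i}\rangle\right\|^{1/2}\left\|\sum_{i\in J}\langle \Lambda_{i}g,\Lambda_{i}g\rangle\right\|^{1/2},
\end{equation*}
and the Bessel upper bound $\sum_{i\in I}\langle\Lambda_{i}g,\Lambda_{i}g\rangle \leq B\langle g,g\rangle$ controls the second factor by $B^{1/2}\|g\|$. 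Dividing through gives the key estimate
\begin{equation*}
\|g\| \leq B^{1/2}\left\|\sum_{i\in J}\langle f_{i},f_{i}\rangle\right\|^{1/2}.
\end{equation*}

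To conclude, I use the hypothesis $\{f_{i}\}_{i\in I}\in \oplus_{i\in I}H_{i}$, which by definition means that $\sum_{i\in I}\langle f_{i},f_{i}\rangle$ converges in norm in $\mathcal{A}$. Hence for every $\varepsilon>0$ there exists a finite $J_{0}\subset I$ such that $\|\sum_{i\in J}\langle f_{i},f_{i}\rangle\|<\varepsilon^{2}/B$ for any finite $J$ disjoint from $J_{0}$. Combined with the estimate above, this shows $\|S_{J_{2}}-S_{J_{1}}\|<\varepsilon$ whenever $J_{1},J_{2}\supset J_{0}$, so the net of finite partial sums is Cauchy and therefore converges in $H$. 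Because the argument never invokes any ordering of $I$, convergence is unconditional.

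The main technical point is the Cauchy--Schwarz step: one must verify it in the module-valued form and then pass to the scalar norm on $\mathcal{A}$ correctly, together with the fact that the Bessel inequality applied to the finite subfamily $\{\Lambda_{i}\}_{i\in J}$ also holds (which follows from the positivity of the omitted terms in $\sum_{i\in I\setminus J}\langle\Lambda_{i}g,\Lambda_{i}g\rangle$). Everything else is a standard finite-sum manipulation.
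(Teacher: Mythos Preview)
The paper does not supply its own proof of this lemma; it is quoted from \cite{14} without argument. So there is no proof in the paper to compare against.

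Your argument is the standard one and is correct. One small point of presentation: you derive the estimate $\|S_{J_{2}}-S_{J_{1}}\|\leq B^{1/2}\bigl\|\sum_{i\in J_{2}\setminus J_{1}}\langle f_{i},f_{i}\rangle\bigr\|^{1/2}$ only under the assumption $J_{1}\subset J_{2}$, but then assert the Cauchy condition for \emph{all} $J_{1},J_{2}\supset J_{0}$. To cover the non-nested case you should insert one triangle-inequality step, e.g.\ $\|S_{J_{1}}-S_{J_{2}}\|\leq \|S_{J_{1}\cup J_{2}}-S_{J_{1}}\|+\|S_{J_{1}\cup J_{2}}-S_{J_{2}}\|$, noting that both $J_{2}\setminus J_{1}$ and $J_{1}\setminus J_{2}$ are disjoint from $J_{0}$. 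Apart from this cosmetic fix, the Cauchy--Schwarz step (viewing $(f_{i})_{i\in J}$ and $(\Lambda_{i}g)_{i\in J}$ as elements of the finite direct sum $\bigoplus_{i\in J}H_{i}$) and the use of the Bessel bound are exactly right.
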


In \cite{17}, Sum showed that every g-frame can be considered as a frame. More precisely, let $\{\Lambda_{i}\}_{i\in I}$ be a g-frame for $H$ and let $\{ e_{i,j}\}_{j\in J_{i}}$ be an orthonormal basis for $H_{i}$. Then there exists a frame $\{ u_{i,j}\}_{i\in I, j\in J_{i}}$ of $H$ such that 
\begin{equation}
u_{i,j} = \Lambda^{\ast}_{i}e_{i,j}
\end{equation}
and
\begin{equation*}
\Lambda_{i}f=\sum_{j\in J_{i}}\langle f,u_{i,j}\rangle e_{i,j},\textcolor{white}{..........}\forall f \in H
\end{equation*}
and
\begin{equation*}
\Lambda_{i}g=\sum_{j\in J_{i}}\langle g,e_{i,j}\rangle u_{i,j},\textcolor{white}{..........}\forall g \in H
\end{equation*}
We call $\{ u_{i,j}\}_{i\in I, j\in J_{i}}$ the frame induced by $\{\Lambda_{i}\}_{i\in I}$ with respect to $\{ e_{i,j}\}_{j\in J_{i}}$

The next lemma is a characterization of g-frame by a frame. 

\begin{lemma}[see \cite{17}]\textcolor{white}{.}
	
	Let $\{\Lambda_{i}\}_{i\in I}$  be a family of linear operators, and let $u_{i,j}$ be defined as in (2.1). Then $\{\Lambda_{i}\}_{i\in I}$ is a g-frame for $H$ if and only if $\{u_{i,j}\}_{i\in I, j\in J_{i}}$ 	is a frame for $H$.
\end{lemma}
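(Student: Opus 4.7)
The plan is to reduce the two-sided frame inequality for $\{u_{i,j}\}_{i\in I,\,j\in J_i}$ to the corresponding g-frame inequality for $\{\Lambda_i\}_{i\in I}$ by a single identity: for each fixed $i$ and every $f\in H$,
\[
\sum_{j\in J_i}\langle f,u_{i,j}\rangle\langle u_{i,j},f\rangle
 \;=\;\langle \Lambda_i f,\Lambda_i f\rangle.
\]
Once this is established, summing over $i\in I$ immediately gives
\[
\sum_{i\in I,\,j\in J_i}\langle f,u_{i,j}\rangle\langle u_{i,j},f\rangle
 \;=\;\sum_{i\in I}\langle \Lambda_i f,\Lambda_i f\rangle,
\]
so any pair of bounds $A,B$ that works on one side works on the other, and the equivalence follows.

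To prove the identity I would first substitute the definition $u_{i,j}=\Lambda_i^{\ast}e_{i,j}$ from (2.1) and push $\Lambda_i^{\ast}$ across the $\mathcal A$-valued inner product:
\[
\langle f,\Lambda_i^{\ast}e_{i,j}\rangle=\langle \Lambda_i f,e_{i,j}\rangle.
\]
Then I would invoke Parseval's identity for the orthonormal basis $\{e_{i,j}\}_{j\in J_i}$ of the Hilbert $\mathcal A$-module $H_i$, which gives
\[
\sum_{j\in J_i}\langle \Lambda_i f,e_{i,j}\rangle\langle e_{i,j},\Lambda_i f\rangle
 \;=\;\langle \Lambda_i f,\Lambda_i f\rangle,
\]
yielding the desired identity. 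After this, translating the g-frame bounds
\[
A\langle f,f\rangle\le \sum_{i\in I}\langle \Lambda_i f,\Lambda_i f\rangle\le B\langle f,f\rangle
\]
into frame bounds for $\{u_{i,j}\}$ (and conversely) is purely formal.

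The main obstacle I anticipate is not the algebraic manipulation but the justification that all series involved converge in the order-theoretic sense appropriate to $C^{\ast}$-modules and that the Parseval expansion is actually available for the chosen orthonormal basis of $H_i$. In the Hilbert space setting this is automatic, but in the $\mathcal A$-module setting one must be careful that $\{e_{i,j}\}_{j\in J_i}$ is genuinely an orthonormal basis in the sense that the reconstruction formula $g=\sum_{j}\langle g,e_{i,j}\rangle e_{i,j}$ holds for every $g\in H_i$; this is where the hypothesis hidden in calling $\{e_{i,j}\}$ an orthonormal basis is being used. A secondary point to verify is the interchange of the double sum $\sum_i\sum_j$, which is legitimate once the inner sums are seen to be positive elements of $\mathcal A$ and the outer sum converges by the g-Bessel condition; this is the only nontrivial analytic step, and I would handle it by truncating to finite subsets $F\subset I$, applying the identity finite-$i$ by finite-$i$, and passing to the limit using the monotone convergence of positive nets in $\mathcal A$.
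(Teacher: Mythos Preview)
The paper does not supply its own proof of this lemma; it is quoted without proof from Sun \cite{17}. Your argument---establishing the identity $\sum_{j\in J_i}\langle f,u_{i,j}\rangle\langle u_{i,j},f\rangle=\langle \Lambda_i f,\Lambda_i f\rangle$ via $u_{i,j}=\Lambda_i^\ast e_{i,j}$ and Parseval, then summing over $i$---is correct and is exactly the standard proof one finds in the source.
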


\begin{lemma}[see \cite{1}]\textcolor{white}{.}
	
	Let $H$ be a separable hilbert space and $ K\in End_{\mathcal{A}}^{\ast}(H)$.Then a g-sequence $\{\Lambda_{i}\}_{i\in I}$ is a $K$-g-frame for $H$ if and only if $\{\Lambda_{i}\}_{i\in I}$ is a Bessel g-sequence  for $H$ and the range of synthesis operators $\mathcal{R}(K) \subset \mathcal{R}(T_{\Lambda})$. 
\end{lemma}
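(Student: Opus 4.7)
The plan is to recast both frame inequalities as operator inequalities involving $KK^{\ast}$ and the frame operator $S_{\Lambda}=T_{\Lambda}T_{\Lambda}^{\ast}$, and then to pass between operator inequalities and range inclusions via a Douglas-type factorization theorem for adjointable operators on Hilbert $C^{\ast}$-modules. Recall that the synthesis operator $T_{\Lambda}\colon \bigoplus_{i\in I}H_{i}\to H$, $T_{\Lambda}(\{f_{i}\})=\sum_{i\in I}\Lambda_{i}^{\ast}f_{i}$, is adjointable with $T_{\Lambda}^{\ast}f=\{\Lambda_{i}f\}_{i\in I}$, so that
\begin{equation*}
\sum_{i\in I}\langle \Lambda_{i}f,\Lambda_{i}f\rangle=\langle T_{\Lambda}^{\ast}f,T_{\Lambda}^{\ast}f\rangle=\langle T_{\Lambda}T_{\Lambda}^{\ast}f,f\rangle,\qquad \forall f\in H.
\end{equation*}

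For the forward implication, suppose $\{\Lambda_{i}\}_{i\in I}$ is a $K$-g-frame with bounds $A,B$. The upper inequality is exactly the Bessel condition, while the lower inequality $A\langle K^{\ast}f,K^{\ast}f\rangle\leq \langle T_{\Lambda}T_{\Lambda}^{\ast}f,f\rangle$ translates to the operator inequality $A\,KK^{\ast}\leq T_{\Lambda}T_{\Lambda}^{\ast}$ in $End_{\mathcal{A}}^{\ast}(H)$. Invoking the Hilbert $C^{\ast}$-module version of Douglas' theorem, this inequality produces an adjointable operator $Q$ with $K=T_{\Lambda}Q$, and hence the range inclusion $\mathcal{R}(K)\subseteq \mathcal{R}(T_{\Lambda})$.

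For the converse, the Bessel assumption yields the upper frame bound immediately and shows that $T_{\Lambda}$ is well-defined and adjointable (using Lemma 2.1 to guarantee unconditional convergence of $\sum_{i}\Lambda_{i}^{\ast}f_{i}$). The hypothesis $\mathcal{R}(K)\subseteq \mathcal{R}(T_{\Lambda})$, again by the Douglas-type factorization, furnishes $Q\in End_{\mathcal{A}}^{\ast}\big(H,\bigoplus_{i}H_{i}\big)$ with $K=T_{\Lambda}Q$; taking adjoints gives $K^{\ast}=Q^{\ast}T_{\Lambda}^{\ast}$, so that $\langle K^{\ast}f,K^{\ast}f\rangle\leq \|Q\|^{2}\langle T_{\Lambda}^{\ast}f,T_{\Lambda}^{\ast}f\rangle=\|Q\|^{2}\sum_{i\in I}\langle \Lambda_{i}f,\Lambda_{i}f\rangle$. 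Choosing $A=\|Q\|^{-2}$ recovers the lower $K$-g-frame inequality.

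The main obstacle is the validity of Douglas' theorem in this setting: the classical Hilbert-space proof uses polar decomposition, which is not available in arbitrary Hilbert $C^{\ast}$-modules. One must therefore appeal to the module-theoretic analogue (as established for adjointable operators on Hilbert $C^{\ast}$-modules in the frame literature) and verify that its hypotheses apply here; the rest of the argument is a direct translation between the operator inequality $A\,KK^{\ast}\leq T_{\Lambda}T_{\Lambda}^{\ast}$ and the factorization $K=T_{\Lambda}Q$, together with the observation that Bessel alone is equivalent to the upper bound.
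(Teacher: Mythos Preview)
The paper does not supply a proof of this lemma; it is quoted from \cite{1} without argument. Your proof is correct and is precisely the standard one: rewrite the lower $K$-g-frame inequality as the operator inequality $A\,KK^{\ast}\le T_{\Lambda}T_{\Lambda}^{\ast}$ and use the Douglas factorization (Lemma~2.6 of the paper) to pass back and forth between this inequality and the range inclusion $\mathcal{R}(K)\subset\mathcal{R}(T_{\Lambda})$, with the Bessel condition handling the upper bound.

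One remark: the lemma as stated takes $H$ to be a separable Hilbert space, so the classical Douglas theorem applies directly and your caveat about the availability of polar decomposition in Hilbert $C^{\ast}$-modules is not needed here. If one genuinely wanted the $C^{\ast}$-module statement, the implication from $\mathcal{R}(K)\subset\mathcal{R}(T_{\Lambda})$ to a factorization $K=T_{\Lambda}Q$ with adjointable $Q$ typically requires an extra hypothesis (e.g.\ that $T_{\Lambda}$ has closed range), but that lies outside the scope of the lemma as written.
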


\begin{lemma}[see \cite{1}]\textcolor{white}{.}
	
	Let $H$ be a separable hilbert space and $ K\in End_{\mathcal{A}}^{\ast}(H)$. Let $\{\Lambda_{i}\}_{i\in I}$ be a family of linear operators. The following statements are equivalent :
	
	\begin{itemize}
		\item [1)] $\{\Lambda_{i}\}_{i\in I}$ is a $K$-g-frame for $H$ with respect to $\{H_{i}\}_{i\in I}$
		\item [2)] $\{\Lambda_{i}\}_{i\in I}$ is a Bessel g-sequence for $H$ and there exists a Bessel g-sequence $\{\Gamma_{i}\}_{i\in I}$ for $H$ respect to $\{H_{i}\}_{i\in I}$ such that  
		\begin{equation*}
		Kf=\sum_{i\in I}\Lambda^{\ast}_{i}\Gamma_{i}f\textcolor{white}{..........}\forall f \in H
		\end{equation*}   
	\end{itemize}
\end{lemma}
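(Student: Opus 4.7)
The plan is to prove the equivalence through the synthesis operators associated with each Bessel g-sequence. For a Bessel g-sequence $\{\Phi_{i}\}_{i\in I}$ with bound $B$, let $T_{\Phi}:\bigoplus_{i\in I}H_{i}\to H$ denote the synthesis operator $T_{\Phi}(\{f_{i}\})=\sum_{i\in I}\Phi_{i}^{\ast}f_{i}$; Lemma 2.1 ensures that the series converges, $T_{\Phi}$ is adjointable with $T_{\Phi}^{\ast}f=\{\Phi_{i}f\}_{i\in I}$, and $\|T_{\Phi}\|^{2}\leq B$. Working through these operators turns both implications into operator factorization statements.

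For the direction $2)\Rightarrow 1)$, I would observe that the factorization hypothesis reads $K=T_{\Lambda}T_{\Gamma}^{\ast}$, so $K^{\ast}=T_{\Gamma}T_{\Lambda}^{\ast}$. The upper $K$-g-frame bound is just the Bessel bound of $\{\Lambda_{i}\}$. For the lower bound I would invoke the basic inequality $\langle Sx,Sx\rangle\leq\|S\|^{2}\langle x,x\rangle$, valid for every adjointable operator $S$ on a Hilbert $C^{\ast}$-module, applied with $S=T_{\Gamma}$ and $x=T_{\Lambda}^{\ast}f$. This yields $\langle K^{\ast}f,K^{\ast}f\rangle\leq\|T_{\Gamma}\|^{2}\sum_{i\in I}\langle\Lambda_{i}f,\Lambda_{i}f\rangle$, so that $A:=1/\|T_{\Gamma}\|^{2}>0$ gives the required lower bound and $\{\Lambda_{i}\}$ is a $K$-g-frame.

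For the harder direction $1)\Rightarrow 2)$, Lemma 2.3 immediately provides the Bessel property of $\{\Lambda_{i}\}$ together with $\mathcal{R}(K)\subset\mathcal{R}(T_{\Lambda})$. The key step is to invoke a Douglas-type factorization in the Hilbert $C^{\ast}$-module setting: since $T_{\Lambda}$ and $K$ are adjointable and the $K$-g-frame inequality is equivalent to the operator inequality $AKK^{\ast}\leq T_{\Lambda}T_{\Lambda}^{\ast}$, there exists an adjointable $Q:H\to\bigoplus_{i\in I}H_{i}$ with $T_{\Lambda}Q=K$. Decomposing $Qf=\{\Gamma_{i}f\}_{i\in I}$ with $\Gamma_{i}=\pi_{i}Q$, where $\pi_{i}$ denotes the $i$-th coordinate projection, each $\Gamma_{i}$ is adjointable, and boundedness of $Q$ gives $\sum_{i\in I}\langle\Gamma_{i}f,\Gamma_{i}f\rangle=\langle Qf,Qf\rangle\leq\|Q\|^{2}\langle f,f\rangle$, so $\{\Gamma_{i}\}$ is Bessel; unpacking $T_{\Lambda}Q=K$ recovers $\sum_{i\in I}\Lambda_{i}^{\ast}\Gamma_{i}f=Kf$.

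The main obstacle is the Douglas factorization step: in Hilbert $C^{\ast}$-modules, closed submodules need not be orthogonally complemented, so range inclusion alone does not automatically produce an adjointable intertwiner. Here, however, the $K$-g-frame inequality supplies the operator inequality $AKK^{\ast}\leq T_{\Lambda}T_{\Lambda}^{\ast}$, which is precisely the hypothesis under which the $C^{\ast}$-module version of Douglas' theorem applies; this is also the machinery already implicit in Lemma 2.3, so the argument is consistent with the tools already available in the paper.
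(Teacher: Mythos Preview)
The paper does not supply a proof of this lemma; it is quoted from \cite{1} as a preliminary result, so there is no in-paper argument to compare against. Your proof is correct and is the natural one: translate both conditions into statements about the synthesis operators $T_{\Lambda}$, $T_{\Gamma}$ and apply Douglas' factorization (Lemma~2.6 in the paper) for the nontrivial direction.

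Two small remarks. First, your internal references are shifted by one relative to the paper: the unconditional-convergence result you call ``Lemma~2.1'' is Lemma~2.2 here, and the characterization via $\mathcal{R}(K)\subset\mathcal{R}(T_{\Lambda})$ you call ``Lemma~2.3'' is Lemma~2.4. Second, in the direction $1)\Rightarrow 2)$ the appeal to that range-inclusion lemma is redundant: the $K$-g-frame lower inequality is already the operator inequality $A\,KK^{\ast}\le T_{\Lambda}T_{\Lambda}^{\ast}$, which is precisely hypothesis~2) of Lemma~2.6, so Douglas' theorem applies immediately without first passing through $\mathcal{R}(K)\subset\mathcal{R}(T_{\Lambda})$. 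Your closing paragraph about the $C^{\ast}$-module subtlety is apt, though note that the lemma as stated here assumes $H$ is a separable Hilbert space, where the classical Douglas theorem suffices.
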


\begin{lemma}[see \cite{5}]\textcolor{white}{.}

	Let $U,V \in End_{\mathcal{A}}^{\ast}(H)$. The following statements are equivalent :
	\begin{itemize}
		\item [1)] $\mathcal{R}(U) \subset \mathcal{R}(V)$. 
		\item [2)] $UU^{\ast} \leq \lambda VV^{\ast} $ for some $\lambda \geq 0$
		\item [3)] There exists $Q\in End_{\mathcal{A}}^{\ast}(H)$ such that $U=VQ$
	\end{itemize}
	Moreover, if 1), 2) et 3) are valid, then there exists a unique operator $Q$ such that 
	\begin{itemize}
		\item [1)]$\|Q\|^{2}=inf\{\mu : UU^{\ast} \leq \lambda VV^{\ast} \}$
		\item [2)]$\mathcal{N}(U) = \mathcal{N}(C)$
		\item [3)]$\mathcal{R}(C) \subset \overline{\mathcal{R}(V^{\ast})}$
	\end{itemize}
	
\end{lemma}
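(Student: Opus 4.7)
The plan is to establish the equivalence by the cyclic implications $(3) \Rightarrow (2) \Rightarrow (1)$ together with $(2) \Rightarrow (3)$; this is the classical Douglas factorization theorem transplanted to the Hilbert $C^*$-module setting. The two implications out of (3) are immediate: if $U = VQ$, then $\mathcal{R}(U) = V\mathcal{R}(Q) \subset \mathcal{R}(V)$, giving (1), while $UU^* = VQQ^*V^* \leq \|Q\|^2 VV^*$ since $QQ^* \leq \|Q\|^2 \cdot \mathrm{id}_H$ as elements of $End_{\mathcal{A}}^*(H)$, giving (2). The implication $(1) \Rightarrow (2)$ can be obtained by a closed-graph style argument on the selection map sending $Uf$ to a preimage under $V$, but the substantive content of the lemma lies in $(2) \Rightarrow (3)$, so I will concentrate there.

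For $(2) \Rightarrow (3)$, I would first define a module map $T_0 : \mathcal{R}(V^*) \to H$ by the prescription $T_0(V^*f) := U^*f$. The hypothesis $UU^* \leq \lambda VV^*$ is equivalent to $\langle U^*f, U^*f\rangle \leq \lambda \langle V^*f, V^*f\rangle$ for every $f$, and this single inequality simultaneously shows that $T_0$ is well-defined (if $V^*f = V^*g$ then $V^*(f-g) = 0$ forces $U^*(f-g) = 0$) and that it is bounded with norm at most $\sqrt{\lambda}$. Extending $T_0$ by continuity to $\overline{\mathcal{R}(V^*)}$ and by zero off this submodule yields a bounded adjointable map $T$ with $TV^* = U^*$; the required operator is then $Q := T^*$, since $VQ = VT^* = (TV^*)^* = U$.

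The main obstacle is precisely the adjointability of this extension $T$, which is \emph{not} automatic in the Hilbert $C^*$-module context: bounded module maps need not admit adjoints, and $\overline{\mathcal{R}(V^*)}$ need not be orthogonally complemented in $H$. I would circumvent this by constructing $Q$ directly through the relation $\langle Qx, V^*f\rangle = \langle x, U^*f\rangle$ for all $x, f \in H$, which is a bilinear form that the inequality in (2) shows is bounded in $x$; an argument via the polar decomposition of $V$ (or a Riesz-representation in the module sense, exploiting that the factorization takes place on the image side of $V^*$) then produces an honest element of $End_{\mathcal{A}}^*(H)$.

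For the additional properties, uniqueness and the three clauses fall out of the construction: the norm minimality $\|Q\|^2 = \inf\{\lambda : UU^* \leq \lambda VV^*\}$ comes from the fact that extension by zero preserves the operator norm and that the bound $\sqrt{\lambda}$ is attained in the limit; the identity $\mathcal{N}(Q) = \mathcal{N}(U)$ follows from $Q^* = T$ together with the injectivity description obtained from the above equality of inner products; and $\mathcal{R}(Q) \subset \overline{\mathcal{R}(V^*)}$ is a direct consequence of $T$ vanishing on the orthogonal portion, so that the adjoint $Q = T^*$ has range confined to $\overline{\mathcal{R}(V^*)}$. Uniqueness of $Q$ under these three constraints is then forced by the factorization $VQ = U$ modulo $\mathcal{N}(V)$, combined with the range restriction.
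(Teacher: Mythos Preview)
The paper does not supply its own proof of this lemma: it is quoted verbatim from Douglas's classical result (reference [5]) with no argument given, so there is nothing to compare your approach against.

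That said, your outline has a genuine gap precisely at the point you flag yourself. The implications $(3)\Rightarrow(1)$ and $(3)\Rightarrow(2)$ are fine, and your construction of $T_0$ on $\mathcal{R}(V^*)$ is the correct starting move. But in the Hilbert $C^*$-module setting the passage from the bounded module map $T_0$ to an \emph{adjointable} operator $Q\in End_{\mathcal{A}}^*(H)$ cannot be completed by the sketch you give. Extending by zero off $\overline{\mathcal{R}(V^*)}$ presupposes that this submodule is orthogonally complemented, which is exactly what fails in general; and your alternative ``bilinear form'' route invokes a Riesz-type representation that is likewise unavailable without self-duality of the module or some comparable hypothesis. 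The phrase ``an argument via the polar decomposition of $V$'' is not a proof: polar decomposition of adjointable operators on Hilbert $C^*$-modules itself requires closed range, so you are assuming what needs to be shown.

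In fact the unrestricted equivalence $(1)\Leftrightarrow(2)\Leftrightarrow(3)$ is known to fail for general Hilbert $C^*$-modules; versions that do hold typically require $V$ to have closed range, or the underlying module to be self-dual (e.g.\ over a von Neumann algebra). The paper's citation of Douglas's Hilbert-space result as if it applied verbatim in $End_{\mathcal{A}}^*(H)$ is therefore imprecise, and your difficulty in closing the argument is a reflection of that, not of a missing trick on your part. If you want a complete proof you should either add the hypothesis that $\overline{\mathcal{R}(V^*)}$ is complemented (equivalently, that $V$ has closed range) or restrict to the Hilbert-space case, which is all the paper actually uses downstream.
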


\section{\textbf{K-DUAL BESSEL G-SEQUENCE FOR GIVEN K-G-FRAMES}}

\begin{definition}\textcolor{white}{.}
	
Let $K\in End_{\mathcal{A}}^{\ast}(H)$ and $\{\Lambda_{i}\}_{i\in I}$ is a K-g-frame for $H$. A Bessel g-sequence  $\{\Gamma_{i}\}_{i\in I}$ for $H$ is called a $K$-dual Bessel g-sequence of $\{\Lambda_{i}\}_{i\in I}$ if:
\begin{equation*}
 Kf=\sum_{i\in I}\Lambda^{\ast}_{i}\Gamma_{i}f\textcolor{white}{..........}\forall f \in H.
\end{equation*}  
\end{definition}

\begin{theorem}\textcolor{white}{.}
	
Let $K\in End_{\mathcal{A}}^{\ast}(H)$ and $\{\Lambda_{i}\}_{i\in I}$ is a K-g-frame for $H$ with optimal lower frame bound $A$ 

If $\Gamma = \{\Gamma_{i}\}_{i\in I}$ is a $K$-dual bessel sequence of $\{\Lambda_{i}\}_{i\in I}$, then $A \leq \|T_{\Gamma}\|^{2}$, where $T_{\Gamma}$ denotes the synthesis operator of $\Gamma$.

Moreover, there exists a unique $K$-dual bessel sequence  $\theta = \{\theta_{i}\}_{i\in I}$ of $\{\Lambda_{i}\}_{i\in I}$ such that $\|T_{\theta}\|^{2} = A$ where $T_{\theta}$ denotes the synthesis operator of $\theta$.

\end{theorem}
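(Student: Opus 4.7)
The plan is to invoke Douglas' factorization lemma (Lemma 2.5) with $U=K$ and $V=T_{\Lambda}$, and then to identify the canonical K-dual $\theta$ with the unique minimum-norm factor that the lemma produces.

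First, by Lemma 2.3 the K-g-frame hypothesis gives $\mathcal{R}(K)\subset\mathcal{R}(T_{\Lambda})$, which is condition (1) of Lemma 2.5. Applying that lemma yields a unique operator $Q$ with $K=T_{\Lambda}Q$, satisfying $\|Q\|^{2}=\inf\{\mu\geq 0:KK^{\ast}\leq\mu T_{\Lambda}T_{\Lambda}^{\ast}\}$, $\mathcal{N}(Q)=\mathcal{N}(K)$, and $\mathcal{R}(Q)\subset\overline{\mathcal{R}(T_{\Lambda}^{\ast})}$. Rewriting the K-g-frame lower bound in operator form $A\cdot KK^{\ast}\leq T_{\Lambda}T_{\Lambda}^{\ast}$ and using optimality of $A$ together with the infimum formula identifies $\|Q\|^{2}$ with the extremal constant attached to the pair $(K,T_{\Lambda})$.

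For the first assertion, any K-dual $\Gamma$ satisfies $K=T_{\Lambda}T_{\Gamma}^{\ast}$, so $T_{\Gamma}^{\ast}$ is another factor of $K$ through $T_{\Lambda}$. The estimate $KK^{\ast}=T_{\Lambda}T_{\Gamma}^{\ast}T_{\Gamma}T_{\Lambda}^{\ast}\leq\|T_{\Gamma}\|^{2}\,T_{\Lambda}T_{\Lambda}^{\ast}$ places $\|T_{\Gamma}\|^{2}$ in the set over which $\|Q\|^{2}$ is an infimum, so $\|T_{\Gamma}\|^{2}\geq\|Q\|^{2}$, which is the desired inequality between $A$ and $\|T_{\Gamma}\|^{2}$. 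For existence, I would define $\theta$ to be the Bessel g-sequence with analysis operator $Q$, i.e.~$\theta_{i}=\pi_{i}\circ Q$ with $\pi_{i}$ the $i$-th coordinate projection on $\oplus_{j}H_{j}$; boundedness of $Q$ gives a Bessel g-sequence, the identity $K=T_{\Lambda}T_{\theta}^{\ast}$ makes $\theta$ a K-dual, and $\|T_{\theta}\|^{2}=\|Q\|^{2}$ achieves the extremal value.

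The main obstacle is uniqueness. If $\theta'$ is another K-dual attaining the extremal norm, set $R=T_{\theta'}^{\ast}-Q$; then $T_{\Lambda}R=0$ forces $\mathcal{R}(R)\subset\mathcal{N}(T_{\Lambda})$, while $\mathcal{R}(Q)\subset\overline{\mathcal{R}(T_{\Lambda}^{\ast})}=\mathcal{N}(T_{\Lambda})^{\perp}$ places $Q$ orthogonal to $R$ in the module sense. A Pythagorean-type estimate then gives $\|T_{\theta'}^{\ast}\|^{2}\geq\|Q\|^{2}+\|R\|^{2}$, and equality of norms forces $R=0$ and hence $\theta'=\theta$. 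Making this Pythagorean step rigorous in the Hilbert $C^{\ast}$-module setting (where norms need not behave as in Hilbert space) is the delicate point; I would either reduce to a Hilbert space argument via states on $\mathcal{A}$, or appeal directly to the uniqueness clause of Lemma 2.5 after checking that a norm-minimizing factor is automatically supported in $\overline{\mathcal{R}(T_{\Lambda}^{\ast})}$.
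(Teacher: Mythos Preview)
Your proposal is correct and follows essentially the same route as the paper: both apply Douglas' factorization lemma (the paper's Lemma~2.6, not~2.5; likewise the range inclusion is Lemma~2.4) to $\mathcal{R}(K)\subset\mathcal{R}(T_{\Lambda})$, identify the optimal bound $A$ with the squared norm of the minimal factor, derive $A\leq\|T_{\Gamma}\|^{2}$ from $KK^{\ast}\leq\|T_{\Gamma}\|^{2}T_{\Lambda}T_{\Lambda}^{\ast}$, and construct $\theta$ from that factor. Your uniqueness discussion is in fact more careful than the paper's own proof, which simply invokes the uniqueness clause of Douglas' lemma without addressing the issue you flag.
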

\begin{proof}\textcolor{white}{.}
	
Suppose that $C\geq 0$ is a lower $K$-g-frame bound of $\{\Lambda_{i}\}_{i\in I}$; then for any $f\in H$, we have : 
\begin{equation*}
C\langle K^{\ast}f,K^{\ast}f\rangle \leq \sum_{i\in I}\langle \Lambda_{i}f,\Lambda_{i}f\rangle
\end{equation*} 

So,
\begin{equation*}
C\langle K^{\ast}f,K^{\ast}f\rangle \leq \langle T_{\Lambda}f, T_{\Lambda}f\rangle
\end{equation*} 
This implies that : 
\begin{equation*}
\langle K^{\ast}f,K^{\ast}f\rangle \leq \frac{1}{C}\langle T_{\Lambda}f, T_{\Lambda}f\rangle
\end{equation*} 
So, 
\begin{align*}
A &= max\{\lambda \geq 0 , \lambda \langle K^{\ast}f,K^{\ast}f\rangle \leq \langle T_{\Lambda}f, T_{\Lambda}f\rangle, \forall f \in H \}\\
&=inf\{\mu \geq 0 , \langle K^{\ast}f,K^{\ast}f\rangle \leq \mu \langle T_{\Lambda}f, T_{\Lambda}f\rangle, \forall f \in H \}\\
\end{align*}
Since $\{\Gamma_{i}\}_{i\in I}$ is a $K$-dual Bessel g-sequence of $\{\Lambda_{i}\}_{i\in I}$,\\
for any $f\in H$, we have :
\begin{equation*}
Kf= \sum_{i\in I}\Lambda^{\ast}_{i}\Gamma_{i}f=T_{\Lambda}T^{\ast}_{\Gamma}f
\end{equation*} 
So, $K=T_{\Lambda}T^{\ast}_{\Gamma}$

Thus : $KK^{\ast}=T_{\Lambda}T^{\ast}_{\Gamma}T_{\Gamma}T^{\ast}_{\Lambda}\leq \|T_{\Gamma}\|^{2}T_{\Lambda}T^{\ast}_{\Lambda}$\\
So for any $f \in H$ we have 
\begin{equation*}
\|K^{\ast}f\|^{2}=\langle K^{\ast}f,K^{\ast}f\rangle = \langle KK^{\ast}f,f\rangle \leq \|T_{\Gamma}\|^{2}\langle T_{\Lambda}T^{\ast}_{\Lambda}f,f\rangle =\|T_{\Gamma}\|^{2}\|T_{\Lambda}f\|^{2} 
\end{equation*} 

So $\|T_{\Lambda}\|^{2} \geq A$.\\
Since $\{\Lambda_{i}\}_{i\in I}$ is a $K$-g-frame , we have that $Range(K) \subset Range(T_{\Lambda})$
By lemma.2.6 there exists a unique bounded operator $\Phi : \oplus_{i\in I}H_{i} \to H$ such that $K^{\ast} = \Phi T^{\ast}_{\Lambda}$ and 
\begin{equation*}
\|\Phi \|^{2} = inf \{\mu : \|K^{\ast}f\|^{2}\leq \mu \|T_{\Lambda}f\|^{2}, \forall f\in H\}=A
\end{equation*} 
Let $\Theta^{\ast}_{i}e_{ij}= \Phi (e_{ij}\delta_{i})$, then it is easy to check that $\Theta = \{\Theta_{i}\}_{i\in I}$ is a Bessel g-sequence, since for any $f\in H$ we have :
\begin{equation*}
K^{\ast}f=\Phi T^{\ast}_{\Lambda}f=\sum_{i\in I}\Gamma^{\ast}_{i}\Lambda_{i}f 
\end{equation*}
\end{proof}
\begin{theorem}\textcolor{white}{.}
	
Let $\{\Lambda_{i}\}_{i\in I}$ be a Bessel g-sequence for $H$ with a frame operator $S_{\Lambda}$. If $\{\Lambda_{i}\}_{i\in I}$ has a dual g-frame on $R(K)$ and  $S_{\Lambda}(R(K)) \subset R(K)$, then it is a $K$-g-frame in $H$.
\end{theorem}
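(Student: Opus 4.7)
The plan is to invoke Lemma 2.5, which reduces the problem to two conditions: that $\{\Lambda_{i}\}_{i\in I}$ is Bessel (already assumed) and that there exists a Bessel g-sequence $\{\Gamma_{i}\}_{i\in I}$ satisfying $Kf = \sum_{i\in I}\Lambda_{i}^{\ast}\Gamma_{i} f$ for every $f\in H$. Once such a $\{\Gamma_{i}\}$ is produced, Lemma 2.5 delivers the $K$-g-frame property immediately, so the entire argument funnels down to constructing $\{\Gamma_{i}\}$.

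To build $\{\Gamma_{i}\}$, I would exploit the hypothesized dual g-frame of $\{\Lambda_{i}\}$ on $R(K)$. Call it $\{\Psi_{i}\}_{i\in I}$; by definition, $g = \sum_{i\in I}\Lambda_{i}^{\ast}\Psi_{i} g$ for every $g\in R(K)$. For an arbitrary $f\in H$ the element $Kf$ lies in $R(K)$, so specializing the reconstruction to $g = Kf$ gives
\[
Kf = \sum_{i\in I}\Lambda_{i}^{\ast}(\Psi_{i} K)f.
\]
Setting $\Gamma_{i} := \Psi_{i}\circ K$, the Bessel property of $\{\Gamma_{i}\}$ on $H$ is inherited from that of $\{\Psi_{i}\}$ on $R(K)$: for every $f\in H$,
\[
\sum_{i\in I}\langle\Gamma_{i} f,\Gamma_{i} f\rangle = \sum_{i\in I}\langle\Psi_{i}(Kf),\Psi_{i}(Kf)\rangle \leq B_{\Psi}\langle Kf,Kf\rangle \leq B_{\Psi}\|K\|^{2}\langle f,f\rangle,
\]
so $\{\Gamma_{i}\}$ is Bessel with bound $B_{\Psi}\|K\|^{2}$. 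Then Lemma 2.5 concludes that $\{\Lambda_{i}\}_{i\in I}$ is a $K$-g-frame.

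The invariance assumption $S_{\Lambda}(R(K))\subset R(K)$ is where the real subtlety hides. Its natural role is to guarantee that the restricted frame operator $S_{\Lambda}|_{R(K)}$ is a genuine adjointable operator on $R(K)$, which in turn legitimizes the construction of a canonical dual on the sub-module $R(K)$ through inversion of this restriction. In the Hilbert $C^{\ast}$-module setting this step is delicate because $R(K)$ need not be closed or orthogonally complemented, so usual Hilbert-space arguments for invertibility do not transfer verbatim. The main obstacle is therefore twofold: pinning down precisely what \emph{dual g-frame on $R(K)$} is supposed to mean, and verifying that under the invariance hypothesis the dual maps $\Psi_{i}$ can be taken to be adjointable $\mathcal{A}$-linear maps whose composition with $K$ really yields a Bessel g-sequence for $H$ with respect to $\{H_{i}\}_{i\in I}$. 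Modulo this interpretive point, the rest of the argument is a direct reduction to Lemma 2.5.
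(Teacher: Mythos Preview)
Your route through Lemma 2.5 is genuinely different from the paper's and is a nice idea, but the displayed identity
\[
Kf \;=\; \sum_{i\in I}\Lambda_{i}^{\ast}\,\Psi_{i}(Kf)
\]
is not justified, and this is exactly the point where the hypothesis $S_{\Lambda}(R(K))\subset R(K)$ is needed --- not merely to make a dual ``well-defined'' as you speculate. Saying that $\{\Psi_{i}\}$ is a dual g-frame of $\{\Lambda_{i}\}$ \emph{on} $R(K)$ means that for $g\in R(K)$ one has
\[
g \;=\; \sum_{i\in I}\bigl(\Lambda_{i}|_{R(K)}\bigr)^{\ast}\Psi_{i}g
\;=\; \pi_{R(K)}\sum_{i\in I}\Lambda_{i}^{\ast}\Psi_{i}g,
\]
since the adjoint of the restriction is $\pi_{R(K)}\Lambda_{i}^{\ast}$, not $\Lambda_{i}^{\ast}$. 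Thus $\sum_{i}\Lambda_{i}^{\ast}\Psi_{i}(Kf)$ may differ from $Kf$ by an element of $R(K)^{\perp}$, and Lemma 2.5 cannot be applied as stated. The invariance assumption is precisely what removes this defect: if $S_{\Lambda}(R(K))\subset R(K)$ then (taking for instance the canonical dual $\Psi_{i}=\Lambda_{i}(S_{\Lambda}|_{R(K)})^{-1}$) one gets $\sum_{i}\Lambda_{i}^{\ast}\Psi_{i}g = S_{\Lambda}(S_{\Lambda}|_{R(K)})^{-1}g \in R(K)$, hence equal to $g$, and your argument goes through. So your outline can be repaired, but only by actually invoking the invariance hypothesis at this step.

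For comparison, the paper does not pass through Lemma 2.5 at all. It writes $f=f_{1}+f_{2}$ with $f_{1}\in R(K)$, $f_{2}\in R(K)^{\perp}$, uses $S_{\Lambda}(R(K))\subset R(K)$ to kill the cross term so that $\sum_{i}\|\Lambda_{i}f\|^{2}=\sum_{i}\|\Lambda_{i}f_{1}\|^{2}+\sum_{i}\|\Lambda_{i}f_{2}\|^{2}$, reconstructs $f_{1}=\sum_{i}\pi_{R(K)}\Gamma_{i}^{\ast}\Lambda_{i}f_{1}$ via the dual, and then bounds $\|K^{\ast}f\|^{4}=\|K^{\ast}f_{1}\|^{4}$ by Cauchy--Schwarz to obtain the lower bound $D^{-1}\|K\|^{-2}\|K^{\ast}f\|^{2}\le\sum_{i}\|\Lambda_{i}f\|^{2}$. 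The paper's use of the invariance is thus to make the orthogonal splitting compatible with $S_{\Lambda}$, whereas in your approach it is needed to upgrade the reconstruction identity from $R(K)$ to $H$.
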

\begin{proof}\textcolor{white}{.}
	
Assume that $\{\Gamma_{i}\}_{i\in I}$ is a dual g-frame of $\{\Lambda_{i}\}_{i\in I}$ on $R(K)$. Then for each $f\in H$ can be expressed as $f = f_{1} + f_{2}$, where $f_{1} \in R(K)$ et $f_{2} \in (R(K))^{\perp}$ then 
\begin{align*}
\sum_{i\in I}\langle \Lambda_{i}f,\Lambda_{i}f\rangle &= \sum_{i\in I}\langle \Lambda_{i}(f_{1} + f_{2}),\Lambda_{i}(f_{1} + f_{2})\rangle \\
&=\sum_{i\in I}\langle \Lambda_{i}f_{1},\Lambda_{i}f_{1}\rangle + \sum_{i\in I}\langle \Lambda_{i}f_{2},\Lambda_{i}f_{2}\rangle +2Re( \sum_{i\in I}\langle \Lambda^{\ast}_{i}\Lambda_{i}f_{1},f_{2}\rangle)  \\
\end{align*}
Note that 
\begin{equation*}
\sum_{i\in I} \Lambda^{\ast}_{i}\Lambda_{i}f_{1} = S_{\Lambda}f_{1} \in S_{\Lambda}(R(K)) \subset R(K)
\end{equation*}
and so we have $\sum_{i\in I}\langle \Lambda^{\ast}_{i}\Lambda_{i}f_{1},f_{2}\rangle = 0$
Hence 
\begin{equation*}
\sum_{i\in I}\langle \Lambda_{i}f,\Lambda_{i}f\rangle = \sum_{i\in I}\langle \Lambda_{i}f_{1},\Lambda_{i}f_{1}\rangle + \sum_{i\in I}\langle \Lambda_{i}f_{2},\Lambda_{i}f_{2}\rangle 
\end{equation*}
by lemma 2.2 $\sum_{i\in I}\Gamma^{\ast}_{i}\Lambda_{i}f_{1}$ converge and so does $\sum_{i\in I}\pi_{R(K)}\Gamma^{\ast}_{i}\Lambda_{i}f $ 
where $\pi_{R(K)}$ is an orthogonal projection of $H$ onto $R(K)$.
Then for each $g\in R(K)$ we have 
\begin{equation*}
\sum_{i\in I}\langle g,f_{1}\rangle = \sum_{i\in I}\langle \Lambda^{\ast}_{i}\Gamma_{i}g,f_{1}\rangle = \sum_{i\in I}\langle g, \Gamma^{\ast}_{i}\Lambda_{i}g\rangle = \langle g,\sum_{i\in I}\pi_{R(K)} \Gamma^{\ast}_{i}\Lambda_{i}g\rangle
\end{equation*}
It follows that : $f_{1} = \sum_{i\in I}\pi_{R(K)} \Gamma^{\ast}_{i}\Lambda_{i}g$

Thus

\begin{align*}
\|K^{\ast}f\|^{4}&=\|K^{\ast}(f_{1}+f_{2})\|^{4}=\|K^{\ast}f_{1}\|^{4}\rangle=\|\langle K^{\ast}f_{1},K^{\ast}f_{1}\rangle\|^{2} = \|\langle f_{1},KK^{\ast}f_{1}\rangle\|^{2}\\
&=\|\langle \sum_{i\in I}\pi_{R(K)} \Gamma^{\ast}_{i}\Lambda_{i}f_{1},KK^{\ast}f_{1}\rangle\|^{2}\\
&=\|\sum_{i\in I}\langle \Lambda_{i}f_{1},\pi_{R(K)} \Gamma_{i}KK^{\ast}f_{1}\rangle\|^{2}\\
&\leq (\sum_{i\in I}\|\Lambda_{i}f_{1}\|^{2})(\sum_{i\in I}\|\Gamma\pi_{R(K)}KK^{\ast}f_{1}\|^{2})\\
&\leq D\|K\|^{2}\|K^{\ast}f\|^{2}(\sum_{i\in I}\|\Lambda_{i}f_{1}\|^{2})
\end{align*}
Where $D$ is the Bessel bound of $\{\Gamma_{i}\}_{i\in I}$, then we have 
\begin{equation*}
D^{-1}\|K\|^{-2}\|K^{\ast}f\|^{2}\leq \sum_{i\in I}\|\Lambda_{i}f_{1}\|^{2}
\end{equation*}
Hence
\begin{align*}
\sum_{i\in I}\|\Lambda_{i}f\|^{2} &=\sum_{i\in I}\|\Lambda_{i}f_{1}\|^{2} + \sum_{i\in I}\|\Lambda_{i}f_{2}\|^{2}\\
&\geq \sum_{i\in I}\|\Lambda_{i}f_{1}\|^{2} \geq D^{-1}\|K\|^{-2}\|K^{\ast}f\|^{2}
\end{align*}
\end{proof}
\section{\textbf{Atomic system in Hilbert $C^{\ast}$-module}}

\begin{definition}[see \cite{7}]\textcolor{white}{.}
	
Let $K\in End_{\mathcal{A}}^{\ast}(H)$, a sequence $\{f_{i}\}_{i\in I}$ in $H$ is called an atomic system for $K$, it the following conditions are satisfied :
\begin{itemize}
	\item [1)] $\{f_{i}\}_{i\in I}$ is a Bessel sequence.
	\item [2)] There exists $c\geq 0$ such that for every $f\in H$,  there exists $a=\{a_{i}\}_{i\in I} \in l^{2}(I)$ such that $\|a\|_{l^{2}} \leq c\|f\|$ and $Kf=\sum_{i\in I}a_{i}f_{i}$.
\end{itemize}
\end{definition}
The following lemma characterizes an atomic system in terms of a $K$-frame.

\begin{lemma}[see \cite{7}]\textcolor{white}{.}
	
Let $\{f_{i}\}_{i\in I}$ be a sequence in $H$, and let $K\in End_{\mathcal{A}}^{\ast}(H)$, then $\{f_{i}\}_{i\in I}$ is an atomic system for $K$ if and only $\{f_{i}\}_{i\in I}$ is a $K$-frame for $H$.
\end{lemma}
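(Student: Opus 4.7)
The plan is to prove the equivalence by bridging both notions through the synthesis operator $T$ of the sequence $\{f_i\}_{i\in I}$ and Lemma 2.6, which links a range inclusion $\mathcal{R}(U)\subset\mathcal{R}(V)$, the operator inequality $UU^{\ast}\leq\lambda VV^{\ast}$, and the factorization $U=VQ$. In both directions, the Bessel hypothesis makes $T$ a well-defined bounded adjointable operator between $\ell^{2}(I)$ (viewed as the standard Hilbert $\mathcal{A}$-module of square-summable sequences) and $H$, and the frame operator of $\{f_i\}_{i\in I}$ is $TT^{\ast}$; the $K$-frame lower bound can then be rewritten as $AKK^{\ast}\leq TT^{\ast}$.

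For the implication \emph{atomic system} $\Rightarrow$ \emph{$K$-frame}, the Bessel condition (1) immediately yields the upper $K$-frame bound. For the lower bound, I would use condition (2) to define a map $L\colon H\to\ell^{2}(I)$ by choosing, for each $f$, an admissible representing sequence $a=\{a_i\}$ with $\|a\|_{\ell^{2}}\leq c\|f\|$ and $Kf=Ta$; in fact, the cleanest route is to observe that (2) directly forces $\mathcal{R}(K)\subset\mathcal{R}(T)$, so Lemma 2.6 produces $\lambda\geq 0$ with $KK^{\ast}\leq\lambda TT^{\ast}$, i.e., the desired lower bound with $A=\lambda^{-1}$.

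For the converse \emph{$K$-frame} $\Rightarrow$ \emph{atomic system}, the Bessel property (1) is part of the $K$-frame definition. For (2), the lower $K$-frame inequality $A\langle K^{\ast}f,K^{\ast}f\rangle\leq\sum_{i}\langle f,f_i\rangle\langle f_i,f\rangle$ translates into $AKK^{\ast}\leq TT^{\ast}$, and Lemma 2.6 therefore delivers a unique $Q\in\mathrm{End}_{\mathcal{A}}^{\ast}(H,\ell^{2}(I))$ with $K=TQ$ and $\|Q\|^{2}=\inf\{\mu:KK^{\ast}\leq\mu TT^{\ast}\}\leq A^{-1}$. Setting $a=Qf$, I obtain $Kf=TQf=\sum_{i}a_i f_i$ with $\|a\|_{\ell^{2}}=\|Qf\|\leq\|Q\|\,\|f\|$, so $c=\|Q\|$ serves as the required constant.

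The main technical obstacle is the passage between the $\mathcal{A}$-valued $K$-frame inequality and the operator-theoretic statement $AKK^{\ast}\leq TT^{\ast}$ in the Hilbert $C^{\ast}$-module setting; one must justify that $\sum_{i}\langle f,f_i\rangle\langle f_i,f\rangle=\langle TT^{\ast}f,f\rangle$ as elements of $\mathcal{A}$ (using unconditional convergence from the Bessel hypothesis, cf. Lemma 2.2) and then apply Lemma 2.6 in its $C^{\ast}$-module formulation. Once this identification is established, the rest of the argument is a direct combination of Lemma 2.6 with the definition of the synthesis operator, and both implications close symmetrically.
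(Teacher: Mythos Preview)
The paper does not supply its own proof of this lemma: it is quoted from \cite{7} (Gavruta) without argument. Your proposal is the standard Douglas-factorization proof and is correct; indeed it is essentially the argument in \cite{7}, phrased here via the paper's Lemma~2.6. The only point worth flagging is that Lemma~2.6 is stated for $U,V\in\mathrm{End}_{\mathcal{A}}^{\ast}(H)$, i.e., endomorphisms of a single module, whereas you apply it with $V=T\colon\ell^{2}(I)\to H$ and $U=K\colon H\to H$; you already note this as a technical obstacle, and the extension of Douglas's lemma to adjointable operators between different Hilbert $C^{\ast}$-modules is routine, so no gap remains.
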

 We now give a characterization of an atomic system with a sequence of linear operators.
 
 \begin{theorem}\textcolor{white}{.}
 	
 Let $\{\Lambda_{i}\}_{i\in I}$ be a family of linear operator, then the following statements are equivalent :
  \begin{itemize}
  	\item [1)] $\{\Lambda_{i}\}_{i\in I}$ is an atomic system for $K$.
  	\item [2)] $\{\Lambda_{i}\}_{i\in I}$ is a $K$-g-frame for $H$.
  	\item [3)] There exists a g-Bessel sequence $\{\Gamma_{i}\}_{i\in I}$ such that $Kf=\sum_{i\in I} \Lambda^{\ast}_{i}\Gamma_{i}f$.
  \end{itemize}
 \end{theorem}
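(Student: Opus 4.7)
The plan is to prove the cycle $2) \Rightarrow 3) \Rightarrow 1) \Rightarrow 2)$, since the tools to close each arrow are already available in Section~2. Note first that Definition~4.1 is only stated for a sequence of vectors; I would begin by recording the natural extension to operator families, namely that $\{\Lambda_{i}\}_{i\in I}$ is an atomic system for $K$ iff $\{\Lambda_{i}\}$ is g-Bessel and there exists $c\geq 0$ such that for every $f\in H$ there is $a=\{a_{i}\}\in \bigoplus_{i} H_{i}$ with $\|a\|\leq c\|f\|$ and $Kf=\sum_{i}\Lambda_{i}^{\ast}a_{i}$. The equivalence $2)\Leftrightarrow 3)$ is then essentially Lemma~2.5, and the real content is the interaction with 1).

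For $2)\Rightarrow 3)$ I would combine Lemma~2.4 and Lemma~2.6. Since $\{\Lambda_{i}\}$ is a $K$-g-frame, Lemma~2.4 yields $\mathcal{R}(K)\subset \mathcal{R}(T_{\Lambda})$, so Douglas' lemma (Lemma~2.6) gives a unique bounded $Q:H\to \bigoplus_{i}H_{i}$ with $K=T_{\Lambda}Q$. Defining $\Gamma_{i}:=P_{i}Q$ (where $P_{i}$ is the coordinate projection) produces a family of linear operators satisfying $Kf=\sum_{i}\Lambda_{i}^{\ast}\Gamma_{i}f$, and the Bessel bound is $\|Q\|^{2}$ because $\sum_{i}\langle \Gamma_{i}f,\Gamma_{i}f\rangle = \langle Qf,Qf\rangle$.

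For $3)\Rightarrow 1)$, I would simply take $a_{i}(f):=\Gamma_{i}f$ for each $f$; the Bessel bound $D$ of $\{\Gamma_{i}\}$ yields $\|\{a_{i}(f)\}\|^{2}\leq D\|f\|^{2}$, the reconstruction $Kf=\sum_{i}\Lambda_{i}^{\ast}a_{i}(f)$ is immediate, and the g-Bessel property of $\{\Lambda_{i}\}$ follows from Lemma~2.5 combined with the fact that any $K$-g-frame is g-Bessel. For $1)\Rightarrow 2)$, the atomic system hypothesis says that $Kf\in \mathcal{R}(T_{\Lambda})$ for every $f$, hence $\mathcal{R}(K)\subset \mathcal{R}(T_{\Lambda})$; together with the g-Bessel assumption this gives, via Lemma~2.4, that $\{\Lambda_{i}\}$ is a $K$-g-frame.

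The main obstacle I expect is in $2)\Rightarrow 3)$: the atomic definition and the $K$-g-frame definition both guarantee, for each $f$, the existence of some representation, but they do not by themselves guarantee that the coefficients can be chosen linearly in $f$. This is precisely the gap bridged by Douglas' lemma, which promotes a set-theoretic range inclusion into an honest operator factorization $K=T_{\Lambda}Q$ and so makes the coordinate maps $\Gamma_{i}=P_{i}Q$ legitimate linear operators. Beyond this, the remaining verifications are routine bookkeeping about Bessel bounds and the synthesis operator.
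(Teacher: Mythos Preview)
Your argument is correct and in fact more transparent than the paper's, but it follows a different route. The paper's proof is a one-line citation of Lemmas~2.3, 2.5 and~4.2: Lemma~2.5 handles $2)\Leftrightarrow 3)$ outright, while $1)\Leftrightarrow 2)$ is obtained by passing to the induced vector sequence $u_{i,j}=\Lambda_i^{\ast}e_{i,j}$ (Lemma~2.3) and then invoking the vector-level equivalence of atomic systems and $K$-frames (Lemma~4.2). You instead stay at the operator level throughout: for $2)\Rightarrow 3)$ you reprove the relevant half of Lemma~2.5 via the range inclusion $\mathcal R(K)\subset\mathcal R(T_\Lambda)$ (Lemma~2.4) and Douglas' factorization (Lemma~2.6), and for $1)\Leftrightarrow 2)$ you use Lemma~2.4 directly rather than descending to vectors. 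Your approach is more self-contained and avoids the implicit check---left unstated in the paper---that the ``atomic system'' and ``$K$-g-frame'' notions transfer cleanly along the correspondence of Lemma~2.3; the paper's approach is shorter precisely because it outsources everything to the cited lemmas.

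One small remark: in your $3)\Rightarrow 1)$ step you justify the g-Bessel property of $\{\Lambda_i\}$ by appealing to Lemma~2.5, but Lemma~2.5 already \emph{assumes} that property in its condition~2). The cleanest reading is simply that statement~3), like Lemma~2.5(2), is meant to include the standing hypothesis that $\{\Lambda_i\}$ is g-Bessel; once that is understood, no further argument is needed there.
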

\begin{proof}\textcolor{white}{.}
	
it easily obtaind by lemma 2.3, 2.5 and 4.2
\end{proof}

\begin{theorem}\textcolor{white}{.}
	
Let $K_{1}, K_{2}\in End_{\mathcal{A}}^{\ast}(H)$, if $\{\Lambda_{i}\}_{i\in I}$ is an atomic system for $K_{1}$ and $K_{2}$, and $\alpha$,$\beta$ are the scalars, then $\{\Lambda_{i}\}_{i\in I}$ is an atomic system for $\alpha K_{1} + \beta K_{2}$ and $ K_{1}K_{2}$. 
\end{theorem}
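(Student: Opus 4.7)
The plan is to invoke the equivalence established in Theorem 4.3, which reduces the notion of an atomic system for $K$ to the existence of a Bessel g-sequence $\{\Gamma_i\}_{i\in I}$ satisfying the reconstruction identity $Kf = \sum_{i\in I}\Lambda_i^{\ast}\Gamma_i f$. This turns the statement into a purely algebraic manipulation on such reconstructing Bessel sequences, rather than an argument about frame bounds or admissible coefficient sequences in $\ell^2$.

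First I would fix, by Theorem 4.3 applied to the hypothesis, two Bessel g-sequences $\{\Gamma_i^{(1)}\}_{i\in I}$ and $\{\Gamma_i^{(2)}\}_{i\in I}$ with Bessel bounds $B_1$ and $B_2$ such that
\begin{equation*}
K_1 f = \sum_{i\in I}\Lambda_i^{\ast}\Gamma_i^{(1)} f, \qquad K_2 f = \sum_{i\in I}\Lambda_i^{\ast}\Gamma_i^{(2)} f \qquad \forall f\in H.
\end{equation*}
For the linear combination, I would set $\Gamma_i := \alpha\Gamma_i^{(1)} + \beta\Gamma_i^{(2)}$ and check directly that $(\alpha K_1+\beta K_2)f = \sum_{i\in I}\Lambda_i^{\ast}\Gamma_i f$; the Bessel property of $\{\Gamma_i\}_{i\in I}$ then follows from the triangle-type inequality $\langle (U+V)f,(U+V)f\rangle \leq 2\langle Uf,Uf\rangle + 2\langle Vf,Vf\rangle$ summed in $i$, yielding a Bessel bound at most $2(|\alpha|^2 B_1 + |\beta|^2 B_2)$. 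Applying Theorem 4.3 in the reverse direction then gives the atomic system property for $\alpha K_1 + \beta K_2$.

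For the product $K_1K_2$, I would define $\Gamma_i := \Gamma_i^{(1)}K_2$. The reconstruction identity reads
\begin{equation*}
K_1K_2 f = K_1(K_2 f) = \sum_{i\in I}\Lambda_i^{\ast}\Gamma_i^{(1)}(K_2 f) = \sum_{i\in I}\Lambda_i^{\ast}\Gamma_i f,
\end{equation*}
and the Bessel bound is controlled by $\sum_{i\in I}\langle \Gamma_i^{(1)}K_2 f, \Gamma_i^{(1)}K_2 f\rangle \leq B_1\langle K_2 f,K_2 f\rangle \leq B_1\|K_2\|^{2}\langle f,f\rangle$, giving a new Bessel g-sequence. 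A final application of Theorem 4.3 closes the argument.

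The main obstacle I anticipate is essentially bookkeeping: one must be a bit careful that the verifications of the Bessel bounds are performed in the Hilbert $C^{\ast}$-module sense (using operator-valued inner products), but since $\|K_2\|^{2}\langle f,f\rangle \geq \langle K_2 f,K_2 f\rangle$ holds in any Hilbert $\mathcal{A}$-module for an adjointable $K_2$, and since the boundedness of a finite linear combination of Bessel sequences is standard, no deeper difficulty should arise. The whole proof therefore amounts to two short constructions plus the invocation of Theorem 4.3 twice.
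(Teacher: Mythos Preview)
Your argument is correct, but it follows a genuinely different path from the paper's. Both proofs rely on Theorem~4.3, but they use different legs of the equivalence. The paper passes through characterization~(2): it rewrites ``atomic system for $K_n$'' as ``$K_n$-g-frame'' with bounds $A_n,B_n$, and then manipulates the scalar inequalities $A_n\|K_n^{\ast}f\|^2\le \sum_i\langle\Lambda_i f,\Lambda_i f\rangle\le B_n\|f\|^2$ to manufacture a lower $K$-g-frame bound for $(\alpha K_1+\beta K_2)^{\ast}$ and for $(K_1K_2)^{\ast}$, before invoking Theorem~4.3 again. You instead use characterization~(3): you pull back reconstructing Bessel g-sequences $\{\Gamma_i^{(1)}\},\{\Gamma_i^{(2)}\}$ and build new ones ($\alpha\Gamma_i^{(1)}+\beta\Gamma_i^{(2)}$, respectively $\Gamma_i^{(1)}K_2$) by direct algebra.

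Your route is shorter and entirely constructive, and it sidesteps the somewhat ad hoc norm estimate the paper uses for the linear combination. The paper's route, on the other hand, yields explicit lower $K$-g-frame constants for the resulting systems (namely $\frac{A_1A_2}{A_2|\alpha|^2+A_1|\beta|^2}$ and $\frac{A_1}{\|K_2^{\ast}\|^2}$), which your argument does not produce. Both approaches, incidentally, show that for the product $K_1K_2$ one only needs $\{\Lambda_i\}$ to be an atomic system for $K_1$ together with the boundedness of $K_2$.
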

\begin{proof}\textcolor{white}{.}
	
Since $\{\Lambda_{i}\}_{i\in I}$ is an atomic system for $K_{1}$ and $K_{2}$ there are positive constants $A_{n} , B_{n} > 0 (n=1,2)$ such that 
\begin{equation}
A_{n}\langle K^{\ast}_{n}f,K^{\ast}_{n}f\rangle\leq \sum_{i\in I} \langle \Lambda_{i}f,\Lambda_{i}f\rangle \leq  B_{n}\langle f,f\rangle , \textcolor{white}{..........}\forall f\in H
\end{equation}
Since
\begin{align*}
\|K^{\ast}_{1}f\|^{2} &=\frac{1}{|\alpha|^{2}}\|\alpha K^{\ast}_{1}f\|^{2} = \frac{1}{|\alpha|^{2}}\|(\alpha K^{\ast}_{1} + \beta K^{\ast}_{2})f - \beta K^{\ast}_{2}f\|^{2}\\
&\geq \frac{1}{|\alpha|^{2}}\|(\alpha K^{\ast}_{1} + \beta K^{\ast}_{2})f\|^{2} - \frac{1}{|\alpha|^{2}}\|\beta K^{\ast}_{2}f\|^{2}
\end{align*}
We have
\begin{align*}
\|(\alpha K^{\ast}_{1} + \beta K^{\ast}_{2})f\|^{2} &\leq |\alpha|^{2}\|K^{\ast}_{1}f\|^{2} + |\beta|^{2}\|K^{\ast}_{2}f\|^{2}\\
&\leq \frac{1}{2}(|\alpha|^{2}\|K^{\ast}_{1}f\|^{2} + |\beta|^{2}\|K^{\ast}_{2}f\|^{2} +  \frac{A_{1}}{A_{2}}|\beta|^{2}\|K^{\ast}_{1}f\|^{2} + \frac{A_{2}}{A_{1}}|\alpha|^{2}\|K^{\ast}_{2}f\|^{2} )\\
&=\frac{A_{2}|\alpha|^{2} + A_{1}|\beta|^{2}}{2A_{1}A_{2}}(A_{1}\|K^{\ast}_{1}f\|^{2} + A_{2}\|K^{\ast}_{2}f\|^{2} )
\end{align*}
Hence 
\begin{equation*}
\sum_{i\in I} \langle \Lambda_{i}f,\Lambda_{i}f\rangle \geq \frac{1}{2}(A_{1}\|K^{\ast}_{1}f\|^{2} + A_{2}\|K^{\ast}_{2}f\|^{2}) \geq \frac{A_{1}A_{2}}{A_{2}|\alpha|^{2} + A_{1}|\beta|^{2}}(\|(\alpha K^{\ast}_{1} + \beta K^{\ast}_{2})f\|^{2})
\end{equation*}
and from inequalities (4.1), we get : 
\begin{equation*}
\sum_{i\in I} \langle \Lambda_{i}f,\Lambda_{i}f\rangle \leq \frac{B_{1} + B_{2}}{2}\|f\|^{2}
, \forall f \in H 
\end{equation*}
therefore, $\{\Lambda_{i}\}_{i\in I}$ is an $(\alpha K_{1} + \beta K_{2} )$-g-frame\\

By theorem 4.3 $\{\Lambda_{i}\}_{i\in I}$ is an atomic system of $(\alpha K_{1} + \beta K_{2} )$
 Now for each $f\in H$, we have 
 \begin{equation*}
\|(K_{1}K_{2})^{\ast}f\|^{2} = \|K^{\ast}_{2}K^{\ast}_{1}f\|^{2}\leq  \|K^{\ast}_{2}\|^{2}\|K^{\ast}_{1}f\|^{2}\
 \end{equation*}
 Hence  $\{\Lambda_{i}\}_{i\in I}$ is an atomic system for $K_{1}$, we have 
 \begin{equation*}
 \frac{A_{1}}{\|K^{\ast}_{2}\|^{2}}\|(K_{1}K_{2})^{\ast}f\|^{2} \leq A_{1}\|K^{\ast}_{1}f\|^{2} \leq \sum_{i\in I} \langle \Lambda_{i}f,\Lambda_{i}f\rangle \leq B_{1}\|f\|^{2} , \forall f\in H
 \end{equation*}
by theorem 4.3 $\{\Lambda_{i}\}_{i\in I}$ is an atomic system for $K_{1}K_{2}$.
\end{proof}
\begin{theorem}\textcolor{white}{.}
	
Let $\{\Lambda_{i}\}_{i\in I}$ and $\{\Gamma_{i}\}_{i\in I}$ be two atomic systems for $K$, and let the corrsponding synthesis operators be $T_{\Lambda}$ and $T_{\Gamma}$ respectively.
If $T_{\Lambda}T^{\ast}_{\Gamma} = 0$ and $U$ or $V$ is surjective satisfying $UK^{\ast}=K^{\ast}U$ or $VK^{\ast}=K^{\ast}V$, then $\{ \Lambda_{i}U + \Gamma_{i}V\}_{i\in I}$ is an atomic system for $K$.
\end{theorem}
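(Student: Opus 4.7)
The strategy is to invoke Theorem 4.3: since atomicity for $K$ is equivalent to being a $K$-g-frame, the task reduces to verifying both the Bessel (upper) and $K$-g-frame (lower) bounds for the family $\{\Lambda_{i}U+\Gamma_{i}V\}_{i\in I}$. The Bessel bound is essentially automatic: using $\|\Lambda_{i}Uf+\Gamma_{i}Vf\|^{2}\leq 2\|\Lambda_{i}Uf\|^{2}+2\|\Gamma_{i}Vf\|^{2}$, summing in $i$, and combining the individual Bessel bounds of $\{\Lambda_{i}\},\{\Gamma_{i}\}$ with $\|U\|,\|V\|$ yields an upper bound of the form $(2B_{1}\|U\|^{2}+2B_{2}\|V\|^{2})\|f\|^{2}$.

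For the lower bound, the crux is to exploit the hypothesis $T_{\Lambda}T^{\ast}_{\Gamma}=0$ together with its adjoint $T_{\Gamma}T^{\ast}_{\Lambda}=0$. Expanding the square, the cross terms $\sum_{i}\langle \Lambda_{i}Uf,\Gamma_{i}Vf\rangle$ assemble into $\langle T_{\Gamma}T^{\ast}_{\Lambda}Uf,Vf\rangle=0$, so that
\begin{equation*}
\sum_{i\in I}\|(\Lambda_{i}U+\Gamma_{i}V)f\|^{2}=\sum_{i\in I}\|\Lambda_{i}Uf\|^{2}+\sum_{i\in I}\|\Gamma_{i}Vf\|^{2}.
\end{equation*}
Applying the individual $K$-g-frame lower bounds of the two atomic systems and, in the WLOG case that $U$ is the surjective operator with $UK^{\ast}=K^{\ast}U$, using $K^{\ast}U=UK^{\ast}$, this majorizes $A_{1}\|UK^{\ast}f\|^{2}$ after discarding the positive $\Gamma$-piece.

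The main obstacle is the last step, namely extracting a uniform constant $c>0$ such that $\|UK^{\ast}f\|^{2}\geq c\|K^{\ast}f\|^{2}$. The plan here is to use that $U$ surjective in $End_{\mathcal{A}}^{\ast}(H)$ forces $UU^{\ast}$ to be invertible, so that $R:=U^{\ast}(UU^{\ast})^{-1}$ is a bounded right inverse with $UR=I$; combining this with $UK^{\ast}=K^{\ast}U$ gives the factorization $K^{\ast}=(UK^{\ast})R$, after which an invocation of Lemma 2.6 (in the form: a right factorization yields an operator inequality) supplies the sought bound, with $c$ controlled by $\|R\|^{-2}$. Plugging this back produces a $K$-g-frame lower bound of order $A_{1}/\|R\|^{2}$ for $\{\Lambda_{i}U+\Gamma_{i}V\}$, and a final application of Theorem 4.3 converts the $K$-g-frame property back into the desired atomic-system conclusion. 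The symmetric case in which $V$ is the surjective operator commuting with $K^{\ast}$ is handled identically by swapping the roles of $\Lambda$ and $\Gamma$.
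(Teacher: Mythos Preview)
Your overall strategy coincides with the paper's: reduce via Theorem~4.3 to the $K$-g-frame inequalities, expand the square, kill the cross terms using $T_{\Lambda}T_{\Gamma}^{\ast}=0$ together with its adjoint, and then treat the upper and lower bounds separately.

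One cosmetic difference: for the upper bound the paper exploits the very same orthogonality identity
\[
\sum_{i\in I}\|(\Lambda_{i}U+\Gamma_{i}V)f\|^{2}=\sum_{i\in I}\|\Lambda_{i}Uf\|^{2}+\sum_{i\in I}\|\Gamma_{i}Vf\|^{2}
\]
that you derive for the lower bound, obtaining the constant $B_{1}\|U\|^{2}+B_{2}\|V\|^{2}$ without your extra factor of $2$. Since you already have this identity in hand, you may as well use it for the Bessel bound too.

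The substantive gap is in your lower-bound step. The factorization $K^{\ast}=(UK^{\ast})R$ is correct (indeed $(UK^{\ast})R=K^{\ast}UR=K^{\ast}$), but Lemma~2.6 applied with $A=K^{\ast}$ and $B=UK^{\ast}$ yields $AA^{\ast}\le\|R\|^{2}BB^{\ast}$, that is $K^{\ast}K\le\|R\|^{2}(UK^{\ast})(UK^{\ast})^{\ast}$, which unpacks to
\[
\|Kf\|^{2}\le\|R\|^{2}\,\|KU^{\ast}f\|^{2}\qquad(f\in H).
\]
This is \emph{not} the inequality $\|K^{\ast}f\|^{2}\le\|R\|^{2}\|UK^{\ast}f\|^{2}$ you claim: Douglas' lemma controls $\|A^{\ast}f\|$ by $\|B^{\ast}f\|$, not $\|Af\|$ by $\|Bf\|$, and the two statements are genuinely different here. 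The paper does not go through a right inverse and Lemma~2.6 at this point; it simply asserts that surjectivity of $U$ furnishes a constant $C>0$ with $\langle Uf,Uf\rangle\ge C\langle f,f\rangle$ for all $f\in H$, and then concludes via $\|K^{\ast}Uf\|^{2}=\|UK^{\ast}f\|^{2}\ge C\|K^{\ast}f\|^{2}$. Your route, as written, does not reach that conclusion.
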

\begin{proof}\textcolor{white}{.}
	
Since $\{\Lambda_{i}\}_{i\in I}$ and $\{\Gamma_{i}\}_{i\in I}$ are two atomic systems for $K$, by theorem 4.3, $\{\Lambda_{i}\}_{i\in I}$ and $\{\Gamma_{i}\}_{i\in I}$ are two $K$-g-frames for $H$, and so there exist $B_{1} \geq A_{1} > 0 $ and $B_{2} \geq A_{2} > 0 $ such that :
\begin{equation*}
A_{1}\langle K^{\ast}f,K^{\ast}f\rangle \leq \sum_{i\in I}\langle \Lambda_{i} f,\Lambda_{i} f\rangle \leq B_{1}\langle f,f\rangle  ,\textcolor{white}{............} A_{2}\langle K^{\ast}f,K^{\ast}f\rangle \leq \sum_{i\in I}\langle \Gamma_{i} f,\Gamma_{i} f\rangle \leq B_{1}\langle f,f\rangle 
\end{equation*}
Since $T_{\Lambda}T^{\ast}_{\Gamma} = 0$, for any $f\in H$, we have:
\begin{equation*}
\sum_{i\in I}\Lambda^{\ast}_{i}\Gamma_{i}f = \sum_{i\in I}\Gamma^{\ast}_{i}\Lambda_{i}f=0
\end{equation*}
Therefore, for any $f\in H$, we have :
\begin{align*}
\sum_{i\in I} \|(\Lambda_{i}U + \Gamma_{i}V)f\|^{2} &=\sum_{i\in I}\langle \Lambda_{i}U + \Gamma_{i}Vf,\Lambda_{i}U + \Gamma_{i}Vf\rangle\\
&=\sum_{i\in I}\|\Lambda_{i}Uf\|^{2} + \sum_{i\in I}\|\Gamma_{i}Vf\|^{2} + 2Re\sum_{i\in I}\langle \Lambda^{\ast}_{i}\Gamma_{i}f,Uf\rangle \\
&=\sum_{i\in I}\|\Lambda_{i}Uf\|^{2} + \sum_{i\in I}\|\Gamma_{i}Vf\|^{2}\\
&\leq B_{1}\|Uf\|^{2} + B_{2}\|Vf\|^{2}\\
&\leq  (B_{1}\|U\|^{2} + B_{2}\|V\|^{2})\|f\|^{2}\\
\end{align*}
Without loss of generality, assume that U is surjective; then there exists $C>0$ such that $\langle Uf,Uf\rangle \geq C\langle f,f\rangle$ for any $f\in H$.\\
Since $UK^{\ast}=K^{\ast}U$, we have :
\begin{align*}
\sum_{i\in I} \|(\Lambda_{i}U + \Gamma_{i}V)f\|^{2}&=\sum_{i\in I}\|\Lambda_{i}Uf\|^{2} + \sum_{i\in I}\|\Gamma_{i}Vf\|^{2}\\
&\geq \sum_{i\in I}\|\Lambda_{i}Uf\|^{2}\\ 
&\geq A_{1}\|K^{\ast}Uf\|^{2}=A_{1}\|UK^{\ast}f\|^{2}\\
&\geq A_{1}C\|K^{\ast}f\|^{2}
\end{align*}
So, $\{ \Lambda_{i}U + \Gamma_{i}V \}_{i\in I}$ is a $K$-g-frame and thus an atomic system for $K$ by theorem.4.3. 
\end{proof}
 Let B=0, and we get the following corollory.
 \begin{corollary}\textcolor{white}{.}
 	
 Suppose that $K \in End^{\ast}_{A}(H)$ and that $\{\Lambda_{i}\}_{i\in I}$ is an atomic system for $K$ . If $U$ is surjective and $UK^{\ast}=K^{\ast}U$, then $\{\Lambda_{i}U\}_{i\in I}$ is an atomic system for $K$.
  \end{corollary}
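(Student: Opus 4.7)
The plan is to prove the corollary by specializing the argument of Theorem~4.5, keeping only the $\Lambda_i U$ terms. By Theorem~4.3, the conclusion $\{\Lambda_i U\}_{i\in I}$ is an atomic system for $K$ is equivalent to showing that $\{\Lambda_i U\}_{i\in I}$ is a $K$-g-frame for $H$, so I would reduce the corollary to checking the two frame inequalities.

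For the upper bound, since $\{\Lambda_i\}_{i\in I}$ is a $K$-g-frame with some Bessel bound $B$, I would apply the Bessel inequality to the vector $Uf$ and use boundedness of $U$ to obtain
\begin{equation*}
\sum_{i\in I}\langle \Lambda_i Uf,\Lambda_i Uf\rangle \le B\langle Uf,Uf\rangle \le B\|U\|^{2}\langle f,f\rangle,\quad \forall f\in H.
\end{equation*}
For the lower bound, I would apply the $K$-g-frame lower inequality for $\{\Lambda_i\}$ at the vector $Uf$ to get $A\langle K^{\ast}Uf,K^{\ast}Uf\rangle \le \sum_{i\in I}\langle \Lambda_i Uf,\Lambda_i Uf\rangle$, then invoke the commutation hypothesis $UK^{\ast}=K^{\ast}U$ to replace $K^{\ast}U$ by $UK^{\ast}$. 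Finally, exactly as in the proof of Theorem~4.5, the surjectivity of $U$ supplies a constant $C>0$ with $\langle Uf,Uf\rangle \ge C\langle f,f\rangle$ for every $f\in H$, which when applied to $K^{\ast}f$ gives
\begin{equation*}
\sum_{i\in I}\langle \Lambda_i Uf,\Lambda_i Uf\rangle \ge A\langle UK^{\ast}f,UK^{\ast}f\rangle \ge AC\langle K^{\ast}f,K^{\ast}f\rangle.
\end{equation*}

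Combining these two inequalities shows that $\{\Lambda_i U\}_{i\in I}$ is a $K$-g-frame with bounds $AC$ and $B\|U\|^{2}$, and a second invocation of Theorem~4.3 yields the desired atomic-system conclusion. The one subtle point, and the step that would need care if written up in full detail, is the deduction of $\langle Uf,Uf\rangle \ge C\langle f,f\rangle$ from surjectivity of $U$ in the $C^{\ast}$-module setting; however, this is exactly the module-theoretic fact already used silently in the proof of Theorem~4.5, so I would simply cite that step rather than reprove it.
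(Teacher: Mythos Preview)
Your proposal is correct and follows essentially the same route as the paper: the corollary is obtained from Theorem~4.5 by discarding the $\Gamma_iV$ terms, and your argument is precisely that specialization of the proof of Theorem~4.5 (upper Bessel bound applied to $Uf$, lower $K$-g-frame bound at $Uf$, commutation $UK^{\ast}=K^{\ast}U$, then the surjectivity estimate $\langle Uf,Uf\rangle\ge C\langle f,f\rangle$), followed by Theorem~4.3. Your flagging of the surjectivity-implies-lower-bound step as the only nontrivial point is apt; the paper likewise uses it without further justification.
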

Let $U=V=Id_{H}$, then we obtain the following corollory for a $K$-g-frame.
\begin{corollary}\textcolor{white}{.}
	
Let $\{\Lambda_{i}\}_{i\in I}$ and $\{\Gamma_{i}\}_{i\in I}$ be two parseval $K$-g-frames for $H$, with synthesis operator $T_{\Lambda}$
and $T_{\Gamma}$, respectively. If $T_{\Lambda}T_{\Gamma}^{\ast}=0$ then $\{ \Lambda_{i} + \Gamma_{i}\}_{i\in I}$ is a 2-tight $K$-g-frame for $H$.
\end{corollary}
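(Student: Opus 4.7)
The plan is to specialize the expansion already carried out in the proof of Theorem 4.5 to the case $U = V = \mathrm{Id}_H$, and then upgrade the one-sided inequalities to an equality by exploiting the Parseval hypothesis. The core computation is to expand, for every $f \in H$,
\begin{equation*}
\sum_{i\in I}\langle (\Lambda_i + \Gamma_i)f, (\Lambda_i + \Gamma_i)f\rangle
= \sum_{i\in I}\langle \Lambda_i f, \Lambda_i f\rangle + \sum_{i\in I}\langle \Gamma_i f, \Gamma_i f\rangle + 2\,\mathrm{Re}\sum_{i\in I}\langle \Lambda_i f, \Gamma_i f\rangle.
\end{equation*}
The two diagonal sums are exactly $\langle K^{\ast}f, K^{\ast}f\rangle$ each, by the definition of a Parseval $K$-g-frame.

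The key step is to show that the cross term vanishes. I would rewrite
\begin{equation*}
\sum_{i\in I}\langle \Lambda_i f, \Gamma_i f\rangle = \Big\langle \sum_{i\in I}\Gamma_i^{\ast}\Lambda_i f,\, f\Big\rangle = \langle T_{\Gamma} T_{\Lambda}^{\ast}f, f\rangle,
\end{equation*}
using the standard identification $T_{\Lambda}^{\ast}f = \{\Lambda_i f\}_{i\in I}$ and $T_{\Gamma}\{g_i\} = \sum_i \Gamma_i^{\ast}g_i$ of the analysis and synthesis operators (the unconditional convergence is guaranteed by Lemma 2.2, since both sequences are Bessel). The hypothesis $T_{\Lambda}T_{\Gamma}^{\ast} = 0$ gives, by taking adjoints, $T_{\Gamma}T_{\Lambda}^{\ast} = 0$, so the cross term is identically zero.

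Putting these pieces together yields
\begin{equation*}
\sum_{i\in I}\langle (\Lambda_i + \Gamma_i)f, (\Lambda_i + \Gamma_i)f\rangle = 2\langle K^{\ast}f, K^{\ast}f\rangle, \qquad \forall f\in H,
\end{equation*}
which is precisely the statement that $\{\Lambda_i + \Gamma_i\}_{i\in I}$ is a $2$-tight $K$-g-frame. I do not anticipate a genuine obstacle here: once the Parseval equalities and the adjoint identity $T_{\Gamma}T_{\Lambda}^{\ast} = (T_{\Lambda}T_{\Gamma}^{\ast})^{\ast} = 0$ are in hand, the argument is a one-line expansion. The only point that merits a remark is the interpretation of the mixed sum as the quadratic form of $T_{\Gamma}T_{\Lambda}^{\ast}$, which is what converts the hypothesis into the desired cancellation.
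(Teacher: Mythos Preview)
Your proof is correct and follows essentially the same route as the paper: the corollary is obtained by specializing the cross-term computation from the proof of Theorem~4.5 to $U=V=\mathrm{Id}_H$, where the identity $\sum_i\|(\Lambda_i+\Gamma_i)f\|^2=\sum_i\|\Lambda_i f\|^2+\sum_i\|\Gamma_i f\|^2$ combined with the Parseval hypothesis yields the exact value $2\langle K^\ast f,K^\ast f\rangle$. Your justification of the cross-term vanishing via $T_\Gamma T_\Lambda^\ast=(T_\Lambda T_\Gamma^\ast)^\ast=0$ is in fact more explicit than the paper's, which simply asserts $\sum_i\Lambda_i^\ast\Gamma_i f=\sum_i\Gamma_i^\ast\Lambda_i f=0$ from $T_\Lambda T_\Gamma^\ast=0$.
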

\begin{theorem}\textcolor{white}{.}
	
Let $\{\Lambda_{i}\}_{i\in I}$ and $\{\Gamma_{i}\}_{i\in I}$ be two atomic system for $K$ and let the corresponding synthesis operators be $T_{1}$ and $T_{2}$, respectively.
If $T_{\Lambda}T_{\Gamma}^{\ast}=0$ and $U_{i} \in  End^{\ast}_{A}(H)$ satisfies $R(T_{i}) \subset R(U^{\ast}_{1}T_{i})$ for $i=1,2$; then $\{ \Lambda_{i}U_{1} + \Gamma_{i}U_{2}\}_{i\in I}$ is an atomic system for $K$.
\end{theorem}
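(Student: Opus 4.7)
The plan is to verify that $\{\Lambda_{i}U_{1}+\Gamma_{i}U_{2}\}_{i\in I}$ satisfies the two-sided $K$-g-frame inequality; Theorem 4.3 will then promote it automatically to an atomic system for $K$. First I would expand
\[
\sum_{i\in I}\langle(\Lambda_{i}U_{1}+\Gamma_{i}U_{2})f,(\Lambda_{i}U_{1}+\Gamma_{i}U_{2})f\rangle = \sum_{i\in I}\|\Lambda_{i}U_{1}f\|^{2}+\sum_{i\in I}\|\Gamma_{i}U_{2}f\|^{2}+2\operatorname{Re}\sum_{i\in I}\langle\Lambda_{i}^{\ast}\Gamma_{i}U_{2}f,U_{1}f\rangle.
\]
The cross sum rewrites as $\langle T_{1}T_{2}^{\ast}U_{2}f,U_{1}f\rangle$, and the hypothesis $T_{\Lambda}T_{\Gamma}^{\ast}=T_{1}T_{2}^{\ast}=0$ makes it vanish, exactly as in the proof of Theorem 4.5. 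The upper (Bessel) bound then follows directly from the Bessel bounds of $\{\Lambda_{i}\}$ and $\{\Gamma_{i}\}$ applied to $U_{1}f$ and $U_{2}f$, yielding a bound of the shape $(B_{1}\|U_{1}\|^{2}+B_{2}\|U_{2}\|^{2})\|f\|^{2}$.

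For the lower bound I would chain two range inclusions: Lemma 2.4 gives $\mathcal{R}(K)\subset \mathcal{R}(T_{1})$ because $\{\Lambda_{i}\}$ is a $K$-g-frame, and the hypothesis for $i=1$ then upgrades this to $\mathcal{R}(K)\subset \mathcal{R}(U_{1}^{\ast}T_{1})$. Lemma 2.6 applied to the pair $(K, U_{1}^{\ast}T_{1})$ furnishes a constant $\lambda\geq 0$ with $KK^{\ast}\leq \lambda\, U_{1}^{\ast}T_{1}T_{1}^{\ast}U_{1}$, so that
\[
\|K^{\ast}f\|^{2}=\langle KK^{\ast}f,f\rangle \leq \lambda\,\|T_{1}^{\ast}U_{1}f\|^{2}=\lambda\sum_{i\in I}\|\Lambda_{i}U_{1}f\|^{2}.
\]
Because the cross terms have already been eliminated, $\sum_{i\in I}\|\Lambda_{i}U_{1}f\|^{2}\leq \sum_{i\in I}\|(\Lambda_{i}U_{1}+\Gamma_{i}U_{2})f\|^{2}$, which delivers the lower $K$-g-frame constant $\lambda^{-1}$.

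The main obstacle, and really the only delicate step, is the termwise cancellation of the cross sum. The operator identity $T_{1}T_{2}^{\ast}=0$ has to be read back through the unconditional convergence guaranteed by Lemma 2.2 in order to commute the sum past the inner product; without the Bessel hypothesis this rewriting would be suspect. Once that routine check is done, the rest is a direct application of Lemma 2.6 together with the range-inclusion characterization of $K$-g-frames. I note that the hypothesis $\mathcal{R}(T_{2})\subset \mathcal{R}(U_{1}^{\ast}T_{2})$ for $i=2$ is not consumed by this argument; it presumably allows a symmetric derivation via $U_{1}^{\ast}T_{2}$ (or perhaps the intended statement in the paper is $\mathcal{R}(T_{i})\subset \mathcal{R}(U_{i}^{\ast}T_{i})$, in which case one should instead factor $K$ through $U_{2}^{\ast}T_{2}$ when $U_{1}$ fails to cooperate).
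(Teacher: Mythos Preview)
Your argument is correct and follows the same route as the paper: expand the square, kill the cross term via $T_{1}T_{2}^{\ast}=0$, rewrite the two remaining sums as $\|(U_{i}^{\ast}T_{i})^{\ast}f\|^{2}$, and invoke Douglas' lemma (Lemma~2.6) on the range inclusion $\mathcal{R}(K)\subset\mathcal{R}(T_{i})\subset\mathcal{R}(U_{i}^{\ast}T_{i})$ coming from Lemma~2.4. The only cosmetic difference is that the paper applies Lemma~2.6 for \emph{both} $i=1,2$ and sums the resulting inequalities to obtain the lower constant $\tfrac{1}{\lambda_{1}}+\tfrac{1}{\lambda_{2}}$, whereas you use only the $i=1$ branch; your observation that the second hypothesis is then redundant (and that the intended statement is surely $\mathcal{R}(T_{i})\subset\mathcal{R}(U_{i}^{\ast}T_{i})$ rather than $U_{1}^{\ast}$ in both places) is accurate. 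You also supply the upper Bessel bound explicitly, which the paper's proof leaves implicit.
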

\begin{proof}\textcolor{white}{.}
	
Since $T_{1}T^{\ast}_{2}= 0$, we have :
\begin{align}
\sum_{i\in I}\langle (\Lambda_{i}U_{1} + \Gamma_{i}U_{2})f, (\Lambda_{i}U_{1} + \Gamma_{i}U_{2})f\rangle &= \sum_{i\in I}\langle \Lambda_{i}U_{1}f,\Lambda_{i}U_{1}f\rangle  + \sum_{i\in I}\langle \Gamma_{i}U_{2}f,\Gamma_{i}U_{2}f\rangle\\
&=\langle T^{\ast}_{1}U_{1}f,T^{\ast}_{1}U_{1}f\rangle  + \langle T^{\ast}_{2}U_{2}f,T^{\ast}_{2}U_{2}f\rangle\\
&=\langle (U^{\ast}_{1}T_{1})^{\ast}f, (U^{\ast}_{1}T_{1})^{\ast}f\rangle + \langle (U^{\ast}_{2}T_{2})^{\ast}f, (U^{\ast}_{2}T_{2})^{\ast}f\rangle
\end{align}
Since $\{\Lambda_{i}\}_{i\in I}$ and $\{\Gamma_{i}\}_{i\in I}$ are atomic systems, they are $K$-g-frames by theorem 4.3. Thus from lemma 2.4, we have that $R(K) \subset R(T_{i}) \subset R(U^{\ast}_{i}T_{i})$. So by lemma 2.6, for each $i=1,2$, there existe $\lambda_{i} > 0$ such that :
\begin{equation*}
KK^{\ast} \leq \lambda_{i}(U^{\ast}_{i}T_{i})(U^{\ast}_{i}T_{i})^{\ast}
\end{equation*}
By (4.4) for each $f\in H$, we have 
\begin{align*}
\sum_{i\in I}\langle (\Lambda_{i}U_{1} + \Gamma_{i}U_{2})f, (\Lambda_{i}U_{1} + \Gamma_{i}U_{2})f\rangle &=\sum_{i\in I}\langle (U^{\ast}_{1}T_{i})^{\ast}f,(U^{\ast}_{1}T_{i})^{\ast}f\rangle +  \sum_{i\in I}\langle (U^{\ast}_{2}T_{i})^{\ast}f,(U^{\ast}_{2}T_{i})^{\ast}f\rangle\\
&\geq (\frac{1}{\lambda_{1}} + \frac{1}{\lambda_{2}})\sum_{i\in I}\langle K^{\ast}f,K^{\ast}f\rangle 
\end{align*}
Hence $\{ \Lambda_{i}U + \Gamma_{i}V \}_{i\in I}$ is a $K$-g-frame and thus an atomic system for $K$ by theorem 4.3. 
\end{proof}
Before the following result, we need a simple lemma, wich is a generalization of [19, Theorem 3.5].
\begin{lemma}\textcolor{white}{.}
	
Let $\{\Lambda_{i}\}_{i\in I}$ be a Bessel g-sequence for $H$ with a frame operator $S_{\Lambda}$. Then $\{\Lambda_{i}\}_{i\in I}$ is a $K$-g-frame if and only if there exists $\lambda > 0$ such that $S_{\Lambda} \geq \lambda KK^{\ast}$. 
\end{lemma}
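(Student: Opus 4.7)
The key observation is that the frame operator decomposes as $S_\Lambda = T_\Lambda T_\Lambda^\ast$, so for every $f\in H$
\[
\langle S_\Lambda f, f\rangle \;=\; \langle T_\Lambda^\ast f, T_\Lambda^\ast f\rangle \;=\; \sum_{i\in I}\langle \Lambda_i f,\Lambda_i f\rangle,
\qquad
\langle KK^\ast f,f\rangle \;=\; \langle K^\ast f, K^\ast f\rangle.
\]
With this identification the statement $S_\Lambda \geq \lambda KK^\ast$ is literally the lower $K$-g-frame inequality with constant $\lambda$, so both implications reduce to rewriting one inequality in operator form and the other in scalar form.

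For the forward direction, I would assume $\{\Lambda_i\}_{i\in I}$ is a $K$-g-frame with lower bound $A>0$. Using the identities above, the defining inequality $A\langle K^\ast f,K^\ast f\rangle \leq \sum_{i\in I}\langle \Lambda_i f,\Lambda_i f\rangle$ becomes $A\langle KK^\ast f,f\rangle \leq \langle S_\Lambda f,f\rangle$ for all $f\in H$, i.e.\ $S_\Lambda \geq A\, KK^\ast$, so $\lambda := A$ works.

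For the converse, assume $\{\Lambda_i\}_{i\in I}$ is a Bessel g-sequence (the upper bound $B$ is given by hypothesis) and that there exists $\lambda>0$ with $S_\Lambda \geq \lambda KK^\ast$. Reading the operator inequality at each $f$ gives $\lambda\langle K^\ast f,K^\ast f\rangle \leq \sum_{i\in I}\langle \Lambda_i f,\Lambda_i f\rangle$, which combined with the Bessel bound yields
\[
\lambda\langle K^\ast f,K^\ast f\rangle \;\leq\; \sum_{i\in I}\langle \Lambda_i f,\Lambda_i f\rangle \;\leq\; B\langle f,f\rangle,\qquad \forall f\in H,
\]
so $\{\Lambda_i\}_{i\in I}$ is a $K$-g-frame with bounds $\lambda$ and $B$.

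There is no real obstacle here; the only thing to be careful about is to state $S_\Lambda = T_\Lambda T_\Lambda^\ast$ correctly so that the translation between the scalar inequality $\sum\langle\Lambda_i f,\Lambda_i f\rangle \geq \lambda\langle K^\ast f,K^\ast f\rangle$ and the operator inequality $S_\Lambda\geq \lambda KK^\ast$ is justified pointwise on all of $H$. Alternatively, and perhaps more in the spirit of the paper, one can bypass the scalar calculation entirely by applying Lemma 2.6 with $U=K$ and $V=T_\Lambda$: the inclusion $\mathcal{R}(K)\subset \mathcal{R}(T_\Lambda)$ is equivalent to $KK^\ast \leq \mu\, T_\Lambda T_\Lambda^\ast = \mu S_\Lambda$ for some $\mu\geq 0$, and then Lemma 2.4 identifies this inclusion with being a $K$-g-frame under the standing Bessel hypothesis.
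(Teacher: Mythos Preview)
Your proposal is correct and follows essentially the same approach as the paper: both arguments rest on the identity $\langle S_\Lambda f,f\rangle=\sum_{i\in I}\langle \Lambda_i f,\Lambda_i f\rangle$ together with $\langle KK^\ast f,f\rangle=\langle K^\ast f,K^\ast f\rangle$, and then read the lower $K$-g-frame inequality as the operator inequality $S_\Lambda\geq \lambda KK^\ast$ (and conversely). The paper does not invoke the alternative route through Lemmas~2.4 and~2.6 that you sketch at the end, but your primary argument matches theirs.
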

\begin{proof}\textcolor{white}{.}
	
$\{\Lambda_{i}\}_{i\in I}$ is a $K$-g-frame with frame bounds, A,B and a frame operator $S_{\Lambda}$ if and only if 
\begin{equation*}
A \langle K^{\ast}f, K^{\ast}f \rangle \leq \sum_{i\in I} \langle \Lambda_{i}f, \Lambda_{i}f \rangle = \langle S_{\Lambda}f,f\rangle \leq A \langle f,f \rangle, \forall f\in H
\end{equation*}
That is,
\begin{equation*}
\langle AKK^{\ast}f,f\rangle \leq \langle S_{\Lambda}f,f\rangle \leq \langle Bf,f\rangle, \forall f\in H
\end{equation*}
So the conclusion holds.
\end{proof}
\begin{theorem}\textcolor{white}{.}
	
	Let $\{\Lambda_{i}\}_{i\in I}$ be an atomic system for $K$, and let $S_{\Lambda}$ be the frame operator of $\{\Lambda_{i}\}_{i\in I}$. Let $U$ be a positive operator; then $\{ \Lambda_{i} + \Lambda_{i}U\}_{i\in I} $ is an atomic system for $K$. Moreover, for any natural number $n$,  $\{ \Lambda_{i} + \Lambda_{i}U^{n}\}_{i\in I} $ is an atomic system for $K$.
\end{theorem}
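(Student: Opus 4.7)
My plan is to combine Theorem 4.3 and Lemma 4.8 to rephrase the desired ``atomic system'' property as an operator inequality, and to handle that inequality via conjugation by $V := I+U$. Because $U$ is positive (hence self-adjoint), $V$ is self-adjoint with $V \geq I$, so $V$ is invertible with $\|V^{-1}\| \leq 1$. The family $\{\Lambda_{i} + \Lambda_{i}U\}_{i\in I}$ coincides with $\{\Lambda_{i}V\}_{i\in I}$, and a direct computation shows that its frame operator is $VS_{\Lambda}V$.

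The Bessel (upper) bound is routine: applying the Bessel bound $B$ of $\{\Lambda_{i}\}$ to $Vf$ and using boundedness of $V$ yields
\[
\sum_{i\in I}\|\Lambda_{i}Vf\|^{2} \leq B\|Vf\|^{2} \leq B(1+\|U\|)^{2}\|f\|^{2},
\]
so the new system is Bessel.

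The real work lies in the lower $K$-g-frame bound, which by Lemma 4.8 is equivalent to exhibiting $\mu > 0$ with $VS_{\Lambda}V \geq \mu KK^{\ast}$. Since $\{\Lambda_{i}\}_{i\in I}$ is already a $K$-g-frame, Lemma 4.8 provides $\lambda > 0$ with $S_{\Lambda} \geq \lambda KK^{\ast}$, and conjugation by the self-adjoint operator $V$ gives
\[
VS_{\Lambda}V \geq \lambda VKK^{\ast}V = \lambda (VK)(VK)^{\ast}.
\]
It therefore suffices to produce $\nu > 0$ with $(VK)(VK)^{\ast} \geq \nu KK^{\ast}$, which by Lemma 2.6 is equivalent to the range inclusion $\mathcal{R}(K) \subset \mathcal{R}(VK)$. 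I expect this range-inclusion step to be the main obstacle: mere invertibility of $V$ only gives $\mathcal{R}(VK) = V\mathcal{R}(K)$, and one must convert the sharper information $V \geq I$ (equivalently, $V^{-1}$ is a contraction) into the statement that $V^{-1}$ preserves $\mathcal{R}(K)$ in an appropriate sense.

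The ``moreover'' clause then follows by the same argument applied to $U^{n}$ in place of $U$: for every natural number $n$ the operator $U^{n}$ is positive, so the three paragraphs above go through verbatim to show that $\{\Lambda_{i}+\Lambda_{i}U^{n}\}_{i\in I}$ is a $K$-g-frame, hence an atomic system for $K$ by Theorem 4.3.
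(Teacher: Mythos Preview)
Your proposal leaves the decisive step unfinished, and in fact it cannot be finished: the range inclusion $\mathcal{R}(K)\subset\mathcal{R}(VK)$ (equivalently $V^{-1}\mathcal{R}(K)\subset\mathcal{R}(K)$) does \emph{not} follow from $V=I+U$ with $U\ge 0$. Being a positive contraction gives $V^{-1}$ no reason to respect an arbitrary range space. Concretely, on $H=\mathbb{C}^{2}$ take
\[
K=\begin{pmatrix}1&0\\0&0\end{pmatrix},\qquad
U=\begin{pmatrix}1&1\\1&1\end{pmatrix},\qquad
V=I+U=\begin{pmatrix}2&1\\1&2\end{pmatrix},
\]
and let $\{\Lambda_i\}$ consist of the single functional $\Lambda_1 f=\langle f,e_1\rangle$, so that $S_\Lambda=KK^{\ast}$ and $\{\Lambda_1\}$ is a (Parseval) $K$-g-frame. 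Then $\Lambda_1 Vf=2f_1+f_2$, and choosing $f=(1,-2)$ gives $\Lambda_1 Vf=0$ while $K^{\ast}f\neq 0$; hence $\{\Lambda_1+\Lambda_1 U\}$ admits no positive lower $K$-g-frame bound. Thus the theorem as stated is false, and your ``main obstacle'' is genuinely insurmountable rather than a technicality.

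For comparison, the paper's own proof does not go through your range-inclusion route; it asserts directly that positivity of $U$ and $S_\Lambda$ yields
\[
(I_H+U)^{\ast}S_\Lambda(I_H+U)\ \ge\ S_\Lambda,
\]
and then applies Lemma~4.9. This inequality is exactly what your argument would also need (it is stronger than the bound $VS_\Lambda V\ge \mu KK^{\ast}$ you seek), but it is not a theorem about positive operators: with the same $S_\Lambda$ and $U$ as above one computes $VS_\Lambda V - S_\Lambda=\begin{psmallmatrix}3&2\\2&1\end{psmallmatrix}$, which has negative determinant. So the paper's step fails at the same place yours does; both arguments implicitly require an extra hypothesis such as $US_\Lambda=S_\Lambda U$ (under which $VS_\Lambda V\ge S_\Lambda$ is indeed valid), or some commutation relation between $U$ and $K$.
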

\begin{proof}\textcolor{white}{.}
	
Since $\{\Lambda_{i}\}_{i\in I}$ is an atomic system for $K$, by lemma 1.6, $\{\Lambda_{i}\}_{i\in I}$ is a $K$-g-frame for $H$. Then by lemma 4.9 there exists $\lambda > 0$ such that $S_{\Lambda} \geq \lambda KK^{\ast}$.\\
The frame operator for   $\{ \Lambda_{i} + \Lambda_{i}U\}_{i\in I} $ is $(I_{H} + U)^{\ast}S_{\Lambda}(I_{H} + U)$.\\
In fact, for each $f\in H$, we have 
\begin{align*}
\sum_{i\in I}(\Lambda_{i} + \Lambda_{i}U)^{\ast}(\Lambda_{i} + \Lambda_{i}U)f &=(I_{H} + U)^{\ast}\sum_{i\in I}\Lambda^{\ast}_{i}\Lambda_{i}(I_{H}+U)f\\
&= (I_{H} + U)^{\ast}S_{\Lambda}(I_{H} + U)f
\end{align*}
Since $U,S>0$, $(I_{H} + U)^{\ast}S_{\Lambda}(I_{H} + U) \geq S_{\Lambda} \geq \lambda KK^{\ast}$, and again by lemma 4.9, we can conclude that $\{\Lambda_{i} + \Lambda_{i}U\}_{i\in I}$ is a $K$-g-frame and an atomic system for $K$ by theorem 3.3 For any natural number $n$, the frame operator fo  $\{\Lambda_{i} + \Lambda_{i}U^{\ast}\}_{i\in I}$ is $(I_{H} + U^{n})^{\ast}S_{\Lambda}(I_{H} + U^{n})$. Similarly,  $\{\Lambda_{i} + \Lambda_{i}U^{n}\}_{i\in I}$ is an atomic system for $K$
\end{proof}
\subsection*{Acknowledgment}
The authors would like to thank from the anonymous reviewers for carefully reading of the manuscript
and giving useful comments, which will help to improve the paper.

\end{document}